\newcounter{enumglobal}
\newcommand{\fr}{\mathfrak}
\newcommand{\und}{\underline}
\newcommand{\ove}{\overline}
\def\re{\color{red}}
\def\blu{\color{cyan}}
\def\deg{\operatorname{deg}}
\def\down{\vee}
\def\up{\wedge}
\def\id{\operatorname{id}}
\def\Id{\operatorname{Id}}
\def\C{{\mathbb C}}
\def\F{{\mathbb F}}
\def\D{{\mathcal D}}
\def\A{{\mathcal A}}
\def\K{{\mathcal K}}
\def\Z{{\mathbb Z}}
\def\O{{\mathcal O}}
\def\hom{{\operatorname{Hom}}}
\def\Ext{{\operatorname{Ext}}}
\def\Mod{{\operatorname{Mod}}}
\def\mod{{\operatorname{mod}}}
\def\gmod{{\operatorname{gmod}}}
\def\d{{\operatorname{d}}}
\def\nes{{\operatorname{nes}}}
\def\eps{{\varepsilon}}
\def\phi{{\varphi}}
\def\emptyset{{\varnothing}}
\def\la{{\lambda}}
\def\La{{\Lambda}}
\def\ga{{\gamma}}
\def\al{{\alpha}}
\def\be{{\beta}}
\def\alch{{\al \check{ \ }}}
\def\p{{\mathfrak p}}
\numberwithin{equation}{section}
\newtheorem{satz}{Satz}[section]
\newtheorem{Theorem}[satz]{Theorem}
\newtheorem{Lemma}[satz]{Lemma}
\newtheorem{Remark}[satz]{Remark}
\newtheorem{Prop}[satz]{Proposition}
\newtheorem{Conjecture}[satz]{Conjecture}
\theoremstyle{definition}
\newtheorem{Example}[satz]{Example}
\newtheorem{Def}[satz]{Definition}
\newtheorem{Convention}[satz]{Convention}
\journal{Arxiv}
\begin{document}

\begin{frontmatter}

%% Title, authors and addresses

%% use the tnoteref command within \title for footnotes;
%% use the tnotetext command for the associated footnote;
%% use the fnref command within \author or \address for footnotes;
%% use the fntext command for the associated footnote;
%% use the corref command within \author for corresponding author footnotes;
%% use the cortext command for the associated footnote;
%% use the ead command for the email address,
%% and the form \ead[url] for the home page:
%%
%% \title{Title\tnoteref{label1}}
%% \tnotetext[label1]{}
%% \author{Name\corref{cor1}\fnref{label2}}
%% \ead{email address}
%% \ead[url]{home page}
%% \fntext[label2]{}
%% \cortext[cor1]{}
%% \address{Address\fnref{label3}}
%% \fntext[label3]{}
\title{On the Ext algebras of parabolic Verma modules and $A_\infty$-structures}
%\date{}
%\maketitle

%% use optional labels to link authors explicitly to addresses:
%% \author[label1,label2]{<author name>}
%% \address[label1]{<address>}
%% \address[label2]{<address>}
%\author{\begin{center}Angela Klamt and Catharina Stroppel\end{center}}
\author[AK,CS]{Angela Klamt and Catharina Stroppel}
\address[AK]{Department of Mathematics, Universitetsparken 5, 2100 Copenhagen  (Denmark);
email: \text{Angela.Klamt@math.ku.dk}}
\address[CS]{Department of Mathematics, University of Bonn, Endenicher Allee 60, 53115 Bonn
(Germany); email: \text{stroppel@math.uni-bonn.de}}

\begin{abstract}
We study the Ext-algebra of the direct sum of all parabolic Verma modules in the principal block of the
Bernstein-Gelfand-Gelfand category $\mathcal{O}$ for the hermitian symmetric pair
$(\mathfrak{gl}_{n+m},\mathfrak{gl}_{n}\oplus \mathfrak{gl}_m)$ and present the corresponding quiver with relations for the
cases $n=1, 2$. The Kazhdan-Lusztig combinatorics is used to deduce a general vanishing result for the higher multiplications
in the $A_\infty$-structure of a minimal model. An explicit calculations of the higher multiplications with
non-vanishing $m_3$ is included.
\end{abstract}

\begin{keyword}
%% keywords here, in the form: keyword \sep keyword
Extensions, Kazhdan-Lusztig, A-infinity, parabolic Verma modules

%%MSC codes here, in the form: 
\MSC[2010] 17B10 \sep 16E30 
%% or \MSC[2008] code \sep code (2000 is the default)

\end{keyword}

\end{frontmatter}

% \linenumbers

%\subjclass{Primary 17B10; Secondary 16E30}

%\begin{document}
%	\pagestyle{empty} 		\pagestyle{fancy} 	\pagenumbering{Roman} \maketitle 	\pagenumbering{arabic} 	\pagestyle{fancy}

%\begin{keywords}Ext-algebra, A-infinity,
%exceptional sequence, Verma modules, Kazhdan-Lusztig polynomials
%\end{keywords}

\section*{Introduction}
In 1988 Shelton determined inductively the graded dimension of the spaces of extensions $\Ext^k(M(\la), M(\mu))=\bigoplus_{k\geq0}\Ext^k(M(\la), M(\mu))$ of parabolic Verma modules
$M(\la)$ and $M(\mu)$ in the parabolic category $\O^\p$ for the Hermitian symmetric cases \cite{Shel1988}. More recently
Biagioli
 reformulated the result combinatorially and obtained a closed dimension formula \cite{Biag2004}. A nice feature is the fact that (parabolic) Verma modules form an exceptional sequence; i.e. they are labeled by a partially ordered
 set $(\La, \leq)$ of highest weights such that for all $k\geq 0$ the following holds:
$$\hom(M(\la), M(\la))=\C \text{ and } \Ext^k(M(\la),M(\mu))=0 \text{ unless } \la \leq \mu.$$
A priori the set $\La$ is infinite, but the category $\O^\p$ decomposes into indecomposable summands, so-called blocks, each
containing only finitely many of the parabolic Verma modules. Taking $M$ to be the direct sum of those
which appear in the principal block yields a finite dimensional vector space $\Ext(M,M)$ which decomposes as the direct sum of
$e_\mu \Ext(M, M) e_\la =  \Ext(M(\mu), M(\la))$, where $e_\mu$ is the projection onto $M(\mu)$ along the sum of the other
direct factors of $M$. It comes along with a natural algebra structure (the Yoneda product) which can be obtained by viewing
$\Ext(M,M)$ as the homology of the algebra $\hom(P_\bullet, P_\bullet)$ with $P_\bullet$ a projective resolution of $M$; the
multiplication is given by the composition of maps between complexes.  The construction of these projective resolutions and
chain maps requires quite detailed knowledge of the projective modules and morphisms between them. Note that already the
question about non-vanishing $\hom$-spaces between parabolic Verma modules is non-trivial (cf. \cite{Boe85} or \cite[Theorem
9.10]{Hump08}). The aim of this paper is to explore this Ext-algebra in more detail for the Hermitian symmetric case of
$(\mathfrak{gl}_{m+n},\mathfrak{gl}_{m}\oplus \mathfrak{gl}_n)$.
 In \cite{brun32008} Brundan and the second author developed a combinatorial description of the category $\O^\mathfrak{p}$
 for $\mathfrak{g}=\mathfrak{gl}_{m+n}$ and $\mathfrak{p}$ the parabolic subalgebra with Levi component $\mathfrak{gl}_m
 \oplus \mathfrak{gl}_n$ via a slight generalization of Khovanov's diagram algebra (cf. Theorem \ref{Cor:equcat}). Using
 these combinatorial techniques along with classical Lie theoretical results, provides enough tools to compute projective
 resolutions and their morphisms. As a crucial tool and byproduct we obtain a version of the
 Delorme-Schmid theorem (cf. \cite{Delo77}, \cite{schm81}) in our situation.
 The main results of the first part of the paper are Theorems \ref{mainalg1} and \ref{mainalg2}, which give an explicit
 description of the $\Ext$-algebra in terms of a path algebra of a quiver with relations for the cases $n=1$ and $n=2$,
 respectively.
The first algebra also occurs in the context of knot Floer Homology, \cite{khovanov2002quivers}, see also \cite{Asae2008}. For a connection to sutured Floer homology we refer to
\cite{Grig2010}.

In the context of Fukaya categories these algebras come along with a natural $A_\infty$-algebra structure
which encodes more information about the object. An $A_\infty$-algebra, also known in topology as a strongly homotopic
associative algebra, has higher multiplications satisfying so-called Stasheff relations (cf. \cite{Kell2001}). As Keller for instance points out, working with minimal models provides the possibility to recover the algebra of complexes filtered by a
family of modules $M(i)$ from some $A_\infty$-structure on $\Ext(\bigoplus M(i), \bigoplus M(i))$. This $A_\infty$-structure
is constructed in the form of a minimal model, i.e. deduced from an algebra structure on $H^*(\hom(\bigoplus P(i)_{\bullet},
\bigoplus P(i)_{\bullet}))$. In particular, there is a natural $A_\infty$-structure on our space of extensions $\Ext(M, M)$.
Since the projective objects are filtered by parabolic Verma modules and therefore parabolic Verma modules generate the
bounded derived category $D^b(\O^\mathfrak{p})$ it is of interest to know more about these $A_\infty$-structures. In the
second part of the paper we construct an explicit minimal model for our $\Ext$-algebra from above. The results from the first
part, in particular the explicit construction of projective resolutions, allow us to analyse the higher multiplications. For
the construction of the minimal models we mimic the approach of \cite{Lu2009} and combine formulas obtained by Merkulov
\cite{Merk99} (for the case of superalgebras) and Kontsevich and Soibelman \cite{Kont2001} (for the $\F_2$-case).  As for the
Ext-algebra structure itself we keep track of all the signs (which sometimes leads to tedious computations). Using these
techniques, we achieve the first vanishing theorem (Theorem \ref{Th:1stvanish}) in case $n=1$. In this theorem we get the
formality of the $\Ext$-algebra, i.e. we construct a minimal model with vanishing $m_k$ for $k \geq 3$. For $n=2$, in the
second vanishing theorem (Theorem \ref{Th:2ndvanish}) we have an $A_\infty$-structure with non-vanishing $m_3$ but vanishing
$m_k$ for $k \geq 4$. Thus, we obtain an example of an $A_\infty$-algebra with non-trivial higher multiplications. The
main result of the paper is presented in the general vanishing theorem (Theorem \ref{Th:genvanish}). It says that for
arbitrary $n$ we get a minimal model with vanishing $m_k$ for $k \geq n^2+2$. A crucial ingredient in the proof is a detailed
analysis of the Kazhdan-Lusztig polynomials forcing higher multiplications to vanish. This article is based on \cite{Klam10}
and focuses on presenting the main results and techniques. Some of the (very) technical detailed calculations are therefore
omitted, but can be found in \cite{Klam10}.
\subsubsection*{Acknowledgment}
The authors thank Bernhard Keller for helpful discussions and the referees for several extremely useful and detailed
comments.

\section{Preliminaries and Category $\mathcal{O}^\mathfrak{p}$}\label{sec:Cat}

We first recall the definition of the Bernstein-Gelfand-Gelfand category $\O$. For a more detailed treatment see \cite{Hump08}, \cite{Mood95}.

Let $\fr{g}$ be a finite dimensional reductive Lie algebra over $\C$ and $\fr{h} \subset \fr{b} \subset \fr{g}$ fixed
Cartan and Borel subalgebras. Denote by $\Phi \subset \fr{h}^*$ the root system of $\fr{g}$ relative to
$\fr{h}$  with the sets $\Delta \subset \Phi ^+ \subset \Phi$ of simple and positive roots respectively. For $\alpha\in\Phi$ we have the root space $\fr{g}_\al$ and the coroot $\alch\in\fr{h}$ normalized by $\al(\alch)=2$.
Let $\fr{g}=\fr{n}^- \oplus \fr{h} \oplus \fr{n}^+$ be the triangular decomposition into negative roots spaces, Cartan subalgebra and positive root spaces. Denote $\La^+ := \{ \la \in \fr{h}^*| \langle \la, \alch \rangle \geq 0 \text{ for all } \al \in \Phi^+ \}$, the set of dominant weights.

Denote by $\rho=\frac{1}{2}\sum_{\al \in \Phi^+} \al$ the half-sum of positive roots and by $\la_0$ the zero weight. Let $W$ be the \textit{Weyl group} with its usual \textit{length function} $w\mapsto l(w)$ of taking the length of a reduced expression. We get a natural action of $W$ on $\fr{h}^*$ with fixed point zero.
Shifting this fixed point to $-\rho$ defines the \textit{dot-action} $w \cdot \la=w(\la+\rho)-\rho$.
where $w \in W, \la \in \fr{h}^*$.

For $L$ any Lie algebra we denote by $U(L)$ the universal enveloping algebra. For $\la \in \fr{h}^*$ and $M$ an arbitrary $U(\fr{g})$-module the \textit{weight
space} of weight $\la$ relative to the action of the Cartan subalgebra $\fr{h}$ is defined as
$$M_\la:= \{v \in M \mid h \cdot v=\la(h)v ,\ \forall \ h \: \in \: \fr{h}\}.$$
We denote by $U(\fr{g})-\Mod$ the category of left $U(\fr{g})$-modules.\\

We fix now a standard parabolic subalgebra $\p$ containing $\fr{b}$. This corresponds to a choice of
a subset $J \subset \Delta$ with associated root system $\Phi_J \subset \Phi$ such that $\p=\fr{l}_J \oplus \fr{u}_J$ with nilradical  $\fr{u}_J$ and Levi subalgebra $\fr{l}_J=\fr{h}\oplus_{\alpha\in \Phi_J}\fr{g}_\al$.

In particular, the choice  $\p= \fr{b}$ corresponds to
$J=\emptyset$ and $\fr{l}_J=\fr{h}$, whereas $\p=\fr{g}$ corresponds to $J=\Delta$ and  $\fr{l}_J=\fr{g}$.
Let $W_\p$ be the Weyl group generated by all $\al \in J$. Denote
by $W^\p$ the set of minimal-length coset representatives for $W_\p \backslash W$, that is
$$W^\p=\{ w \in W |\  \forall \  \al \in J : l(s_\al w)>l(w)\}.$$
Define the set of {\it $\p$-dominant weights} as
\begin{equation*}
\La_J^+:=\{\la \in \fr{h}^* | \langle \la, \alch \rangle \in \Z^+ \text{ for all } \al \in J
\}. \end{equation*} Denote by $E(\la)$ the finite dimensional $\fr{l}_J$-module with highest weight $\la \in \La_J^+$.

\begin{Def}
The category $\O^\p$ is the full subcategory of $U(\fr{g})-\Mod$  whose objects $M$ satisfy the following
conditions:
\begin{itemize}
		\item[$\O 1)$] $M$ is a finitely generated $U(\fr{g})$-module; 		\item[$\O 2)$] $M$ is $\fr{h}$-semisimple,
i.e., $M=\bigoplus_{\la \in 		 \fr{h}^*} M_\la$; 		\item[$\O 3)$] $M$ is
locally $\p$-finite, i.e. $\operatorname{dim}_\mathbb{C} U(\fr{p}) \cdot v<\infty$ for all $v \in M$.
\end{itemize}
\end{Def}

We recall a few standard results on  $\O^\p$, see \cite{Hump08}, \cite{RC} for details.
\begin{Def}
For $\la \in \La_J^+$ we define the \textit{parabolic Verma module}
$$M(\la):= U(\fr{g}) \otimes_{U(\fr{p}_J)} E(\la).$$
\end{Def}
It has highest weight $\la$ and is the largest quotient contained in $\O^\p$ of the ordinary Verma module with highest weight
$\la$. In particular, it has a unique simple quotient which is denoted by $L(\la)$. The $L(\la)$, for $\la\in\La_J^+$
constitute a complete set of non-isomorphic simple objects in $\O^\p$. The category $\O^\p$ has enough projective
objects; for $\la \in \La_J^+$ let $P(\la)$ be the projective cover of $L(\la)$. The category $\O^\p$ splits into direct summands (so-called `blocks')  $\O_\chi^\p$,
$$\O^\p = \bigoplus_\chi \O_\chi^\p,$$
indexed by $W$-orbits under the dot-action. The summand $\O_\chi^\p$ is the full subcategory of modules containing only composition factors of the form $L(\la)$ with $\la \in \chi\cap \La_J^+$.
In particular $M(\la)$ and $P(\la)$ are objects of $\O_\chi^\p$ for $\la\in\chi$. Let $\O^\p_0$ be the {\it principal block} of $\O^\p$ corresponding to the orbit through zero which has precisely the $L(w \cdot {\la_0})$ with $w \in W^\p$ as simple objects. Since we work with left
cosets, for better readability we write $P(x \cdot \la)=:P(\la.x)$; similarly for simple modules and parabolic Verma modules.
\begin{Remark}{\rm
To combine later on Lie-theoretical results for $\O^\p_0(\fr{sl}_{m+n})$ with combinatorial results known for
$\O^{\p'}_0(\fr{gl}_{m+n})$ we will tacitly use the standard equivalence of categories $\O^{\p'}_0(\fr{gl}_{m+n})\cong \O^\p_0(\fr{sl}_{m+n})$
where $\p'$ is the parabolic subalgebra with corresponding Levi component $\fr{gl}_m \oplus \fr{gl}_n$ and $\p=\p' \cap
\fr{sl}_{m+n}$. }
\end{Remark}

\section{The Ext algebra}\label{ch:NotHom}

We first introduce the homological and internal shift functors, $[i]$ and $\langle i \rangle$ for $i\in\mathbb{Z}$, on the
category of complexes:
\begin{Convention}\label{Not:sign}
For a complex $C_\bullet=(C_\bullet,d_\bullet)$ define $C[i]_\bullet$ by $C[i]_j=C_{j-i}$ with
differential $d[i]_j=(-1)^i d_j$. For $M$ a graded $A$-module define the internal shift $M\langle i \rangle$ by $M\langle i
\rangle_j=M_{j-i}$. We denote by $C_\bullet\langle i \rangle$ the (internally) shifted complex $C_\bullet$ obtained
by just shifting each object; the differential maps stay homogeneous of degree
zero.
\end{Convention}
%\subsection{The algebra structure on $\Ext$}
%subsubsection{$\Ext$-spaces}\label{sec:Exthom}
Let $A, B \in Ob(\A)$ be objects in an abelian category $\A$ and assume that $A$ and $B$ have finite projective dimension.
Given projective resolutions $P_\bullet$ and $Q_\bullet$ of $A$ and $B$, respectively, we define a differential graded
structure on $\hom(P_\bullet,Q_\bullet)$ with $\hom(P_\bullet,Q_\bullet)^r=\prod_p \hom(P_p,Q_{p+r})$ and differential
$d_p(f)=d {\circ}f-(-1)^p f {\circ}d$ (c.f. \cite[III.6.13]{Gelf88}). The space of extensions $\Ext$ can then be
computed using the derived category,
\begin{align*}
\Ext^k(A,B)&=\hom_{\D(\A)}(A[0],B[k])                &=&	\hom_{\D(\A)}(P_\bullet[0],Q_\bullet[k]) \\
           &=\hom_{\K(\A)}(P_\bullet[0],Q_\bullet[k])& =& \hom_{\K(\A)}(P_\bullet,Q_\bullet)[k]\\
           &=H^0(\hom(P_\bullet,Q_\bullet)[k]) &=& H^k(\hom(P_\bullet,Q_\bullet)),
\end{align*}
where the third equality holds because $P_\bullet$ is a bounded complex of projectives. In other words, $\Ext^k(A,B)$ can be determined by first computing the homomorphism spaces of the projective resolutions and
afterwards taking its cohomology. Cycles in $\hom(P_\bullet,Q_\bullet)$ are chain maps (according to the degree commuting or
anticommuting) and boundaries are homotopies (up to sign). If considered as chain maps between translated complexes (i.e. in
$\hom_{\D^b(\A)}(P_\bullet[0],Q_\bullet[k])$) with the sign convention \ref{Not:sign}, the cycles become commuting chain maps
and boundaries stay usual homotopies.
%\subsubsection{Multiplication}

We are now interested in the case $A=B$ and the algebra $\Ext^k(A,A)=H^k(\hom(P_\bullet,P_\bullet))$. The multiplication in
$\Ext(A,A)$ is induced from the multiplication in the algebra $\hom(P_\bullet, P_\bullet)$, where it is given by composing of chain maps. Multiplication will be written from left to right, i.e. for $\al$, $\be \in \hom(P_\bullet, P_\bullet)$ we have $(\al \cdot \be)(x)=\be(\al(x))$.

If
$A=\bigoplus\limits_{\al \in I} A_\al$ and $P_{\al \bullet}$ is a projective resolution of $A_\al$ with corresponding
decomposition $P_\bullet=\bigoplus\limits_{\al \in I} P_{\al \bullet}$ then $\Id_\al=[\id]\in \Ext^0(A_\al,A_\al)$. The
elements $\Id_\al$ form a system of mutual orthogonal idempotents, hence we can write
$$\Ext^k(A,A)=\bigoplus_{\al, \beta \in I} \Id_\al \Ext^k(A_\al, A_\beta) \Id_\beta.$$
It is then enough to determine $\Ext^k(A_\al, A_\beta)$ for any $k$, $\alpha$, $\beta$ and the products of elements $x \in
\Ext^k(A_\al, A_\beta)$ and $y \in \Ext^l(A_\beta, A_\ga)$, interpreting their product as
$$x \cdot y =\Id_\al x\Id_\beta \Id_\beta y \Id_\gamma \in \Ext^{k+l}(A, A).$$
\section{$\O^\p(\fr{gl}_{m+n}(\C))$ via Khovanov's diagram algebra}\label{ch:Khov}
We specialize now our setup to $\fr{g}=\fr{gl}_{m+n}(\C)$ with the standard Borel subalgebra $\fr{b}$ given by upper
triangular matrices containing the Cartan ${\fr h}$ of diagonal matrices. Let $\p$ be the parabolic subalgebra associated to
the Levi subalgebra $\fr{l}=\fr{gl}_{m}(\C) \oplus \fr{gl}_{n}(\C)$. Then our key tool is the following special case of the main theorem from \cite{brun32008}, first observed in \cite{StrSpringer}:
%\begin{Theorem}[{\cite[Corollary 8.21.]{brun32008}}]\label{Th:equcat}
%There is an equivalence of categories
%$$\mathbb{E}:\O(m,n,I) \to K(m,n,I)-\mod$$
%such that $\mathbb{E}(L(\la))\cong L(\la)$, $\mathbb{E}(M(\la))\cong M(\la)$ and $\mathbb{E}(P(\la))\cong P(\la)$ for each
%$\la \in \La(m,n,I)$.
%\end{Theorem}
\begin{Theorem}\label{Cor:equcat}
There is an equivalence of categories from the principal block of $\O^\p$ to the category of finite dimensional left modules
over the Khovanov diagram algebra, $K_m^n-\mod$, sending the simple module $L(\la) \in \O^\p$ to the simple module $L(\la) \in
K_m^n-\mod$, the parabolic Verma module $M(\la)$ to the cell module $M(\la)$ and the indecomposable projectives to the
corresponding indecomposable projectives.
\end{Theorem}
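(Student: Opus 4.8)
The plan is to reduce the statement to an isomorphism of finite-dimensional algebras and then match combinatorial data on both sides. Since the principal block $\O^\p_0$ has only finitely many simple objects $L(\la.x)$, $x\in W^\p$, and enough projectives, it is equivalent to the category of finite-dimensional modules over $\End_{\O^\p_0}(P)^{\mathrm{op}}$, where $P=\bigoplus_{x\in W^\p} P(\la.x)$ is a minimal projective generator. As $K_m^n$ is by construction a basic algebra, it therefore suffices to produce an isomorphism $\End_{\O^\p_0}(P)^{\mathrm{op}}\cong K_m^n$ compatible with the idempotents indexing the indecomposable projectives; the prescribed behaviour on simples, on cell/Verma modules, and on projectives will then be read off from the matching of idempotents.

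First I would fix the bijection between the labelling sets. The simple objects of $\O^\p_0$ are indexed by $W^\p$, the minimal coset representatives for $W_\p\backslash W$ with $W_\p=S_m\times S_n$; these are exactly encoded by weight diagrams carrying $m$ symbols of one type and $n$ of the other, which is precisely the indexing set of cup/cap diagrams for $K_m^n$. Under this bijection the Bruhat order on $W^\p$ matches the dominance order, making both $\O^\p_0$ and $K_m^n-\mod$ into highest weight categories with standard objects the parabolic Verma modules $M(\la)$, respectively the cell modules; one checks that the posets and the labelling of standard objects agree.

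The decisive Lie-theoretic input is the computation of graded decomposition numbers. By the (parabolic) Kazhdan--Lusztig conjecture --- a theorem in this setting --- the multiplicities $[M(\la):L(\mu)]$ are values at $1$ of parabolic Kazhdan--Lusztig polynomials, and in the Hermitian symmetric case of a maximal parabolic in type $A$ these polynomials are monomials computable purely diagrammatically (by the classical combinatorics of Kazhdan--Lusztig polynomials for Grassmannians, cf. Lascoux--Sch\"utzenberger and Zelevinsky). Using Soergel's theory, $\O^\p_0$ carries a Koszul grading whose graded multiplicities are the full polynomials; I would check that these agree termwise with the graded decomposition numbers of $K_m^n$ read off from oriented circle diagrams. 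This forces the two graded Cartan matrices to coincide.

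The main obstacle is that equality of posets and Cartan data yields only numerical agreement, not an algebra isomorphism, so the genuine work is to produce the isomorphism explicitly. For this I would use Soergel's functor $\V=\hom_{\O^\p_0}(P_{\mathrm{big}},-)$, with $P_{\mathrm{big}}$ the antidominant projective: it is fully faithful on projectives and identifies each $P(\la.x)$ with an explicit module over the (parabolic) coinvariant algebra, realised geometrically via the cohomology of the relevant Schubert and Springer varieties (this is the observation underlying \cite{StrSpringer}). Computing the images of the projectives and of the morphisms between them, one recovers $\End_{\O^\p_0}(P)^{\mathrm{op}}$ by generators and relations and matches these with the surgery multiplication defining $K_m^n$. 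Verifying that the composition of homomorphisms corresponds exactly to the cut/merge surgery rule is the technical heart of the argument; once this is in place the equivalence, together with its effect on simples, cell modules and projectives, follows immediately.
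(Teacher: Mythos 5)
The paper does not prove Theorem \ref{Cor:equcat} at all: it is imported as ``the following special case of the main theorem from \cite{brun32008}, first observed in \cite{StrSpringer}'', so there is no internal argument to compare yours against. Judged against the cited sources, your outline is faithful to the actual route: reduction to an isomorphism $\End_{\O^\p_0}(P)^{\mathrm{op}}\cong K_m^n$ for a minimal projective generator $P$, the bijection between $W^\p$ and diagrammatical weights, the monomiality of the graded decomposition numbers in the Grassmannian case, and the identification of $\V P(\la)$ with explicit modules over the cohomology of Springer fibres followed by a comparison of composition with the surgery product. That is indeed the content of \cite{StrSpringer} (for the Khovanov-algebra case) and of the extension in \cite{brun32008}.

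As a proof, however, your proposal has a genuine gap exactly where you locate ``the technical heart'': everything up to that point (matching posets, Cartan matrices, even graded multiplicities) is, as you yourself note, only numerical and cannot yield the algebra isomorphism; the entire content of the theorem is the explicit presentation of $\End_{\O^\p_0}(P)^{\mathrm{op}}$ and the verification that composition of homomorphisms realises the cut/merge surgery rule, and this step is named but not carried out. Two further points would need justification rather than assertion: the full faithfulness of the parabolic Soergel functor $\V$ on projectives (the Struktursatz is usually stated for the regular block of the full category $\O$, and its parabolic analogue is a nontrivial input), and the claim that once idempotents are matched the equivalence automatically sends $M(\la)$ to the cell module --- this does follow, but only after one checks that the isomorphism is compatible with the highest weight (quasi-hereditary) structures on both sides, i.e.\ that the orders and the standardisation agree, since standard modules are not determined by the idempotents alone.
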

Here $K^n_m$ is the algebra defined diagrammatically in \cite{brun32008} with an explicit distinguished basis given by certain diagrams (see below) and a multiplication defined by an explicit ``surgery'' construction which can be expressed in terms of an extended 2-dimensional TQFT construction, \cite{StrSpringer}, generalizing a construction of Khovanov \cite{KhovJones}. The distinguished basis is in fact a (graded) cellular basis in the sense of Graham and Lehrer \cite{Grah2004} in the graded version of Hu and Mathas \cite{Hu2010}. The algebra is shown to be quasi-hereditary in \cite{Brun12008}. Hence we have  cell or standard modules $M(\la)$, their projective covers $P(\la)$ and irreducible quotients $L(\la)$. This is meant by the notation used in the theorem.

\subsection{The algebra $K_m^n$ and its basic properties}
%\subsubsection{Basic definitions} \label{basic}
For the construction of $K_m^n$, we recall from \cite{Brun12008} the notions of weights, cup/cap/circle diagrams adapted to our situation. Let $\la\in\La_J^+$ be the highest weight of a simple module in $\O^\p_0=\O^\p(\fr{gl}_{m+n}(\C)_0$ and let
$$\rho=\eps_{m+n-1}+2\eps_{m+n-2}+ \cdots +(m+n-1) \eps_{1} \in \fr{h}^*.$$
The {\it (diagrammatical) weight} associated to $\la$ is obtained by labeling the number $i$ on the real line by $\down$ if $i$ belongs to $I_\down(\la)$ and by $\up$ if $i$ belongs to $I_\up(\la)$ respectively, where
\begin{eqnarray*}
I_\down(\la)&:= &\{(\la+\rho,\eps_1), \ldots, (\la+\rho,\eps_m)\}\\
I_\up(\la)&:=&\{(\la+\rho,\eps_{m+1}), \ldots,(\la+\rho,\eps_{m+n})\}.
\end{eqnarray*}
\begin{figure}
 \center
\begin{tikzpicture}
\begin{scope}
		\draw (-1, 0)--(2.6, 0); 		\draw (0,0) node[below=-3pt] {$\up$}; 		\draw (0.4,0) node[below=-3pt] {$\up$}; 		 
\draw (0.8, 0) node[above=-3pt] {$\down$}; 		\draw (1.2,0) node[above=-3pt] {$\down$}; 		\draw (1.6,0) node[above=-3pt]
{$\down$}; 		\end{scope} 		\begin{scope}[yshift=0cm] 		\scriptsize 		\draw (-1.4, 0) node {$\cdots$}; 		 
\draw (-0.8, 0.1)--(-0.8, -0.1); 		\draw (-0.9, 0) node [above] {$-2$}; 		\draw (-0.4, 0.1)--(-0.4, -0.1); 		
\draw (-0.5,0) node[above] {$-1$}; 		\draw (2,0) node[above] {$5$}; 		\draw (2, 0.1)--(2, -0.1);			\draw (2.4,0)
node[above] {$6$}; 		\draw (2.4, 0.1)--(2.4, -0.1);				\draw (3,0) node {$\cdots$}; 		\end{scope}
\end{tikzpicture}
\caption{the zero weight for $n=2$ and $m=3$} 	\label{fig:zeroweight}
\end{figure}
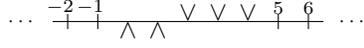
Let $\La_m^n$ be the set of diagrammatical weights obtained in this way. Note that the labels are always on the $(m+n)$ places $i \in I=\{0,\dots,m+n-1\}$ which we call {\it vertices}. The diagrammatical weight associated to $\la_0$ is given by putting all $\up$'s to the left and all $\down$'s to the right, see Figure \ref{fig:zeroweight}. In fact, $\La_m^n$ consists precisely of the diagrams obtained by permuting the $n \ \up$'s and $m \  \down$'s establishing a bijection between the highest weights of parabolic Verma modules in $\O^\p_0$ and elements in $\La_m^n$. The dot-action  corresponds then to permuting the labels; swapping $\down$'s to the right means getting bigger in the Bruhat order, see \cite[Section 1]{brun32008}.

We fix the above bijection and do not distinguish in notation between weights and diagrammatical weights.
For $\la=\la_0.x$ with $x \in W^\p$ we write $l(\la)$ for $l(x)$. For each $i\in I$ define \emph{the relative length}
\begin{eqnarray}
\label{rell}
l_i(\la,\mu) :=& \#\{j \in I\:|\:j \leq i \text{ and vertex $j$ of $\la$ is labeled $\down$}\}\nonumber
\\
&-
\#\{j \in I\:|\:j \leq i \text{ and vertex $j$ of $\mu$ is labeled $\down$}\}
\end{eqnarray}
and note that $l(\la)-l(\mu)=\sum_{i \in I} \ell_i(\la,\mu)$ by \cite[Section 5]{Brun12008}.

A {\it cup diagram} is a diagram obtained by attaching rays and finitely many cups (lower semicircles) to the vertices $I$, so that cups join two vertices $i\in I$,  rays join vertices $i\in I$ down to infinity, and rays or cups do not intersect other rays or cups. A {\it cap diagram} is the horizontal mirror image of a cup diagram, so caps (i.e. upper semicircles) instead of cups are used. The mirror image of a cup
 (resp. cap) diagram $c$ is denoted by $c^*$.\\

If $c$ is a cup diagram and $\la$ a weight in $\La_m^n$, we can glue $c$ and $\la$ and obtain a new diagram denoted $c\la$. It is called an {\it oriented cup diagram} if
\begin{itemize}
	\item each cup is oriented, i.e. one of its vertices is labeled $\down$, and one $\up$; 	\item there are
not two rays in $c$ labeled $\down \up$ in this order from left to right.
\end{itemize}
An example is given in Figure \ref{fig:anOrientedDiagram}.
\begin{figure}
 \center
\begin{tikzpicture}
\begin{scope}
		\draw (0,0) node[above=-1.55pt] {$\up$}; 		\draw (0.4,0) node[above=-1.55pt] {$\down$}; 		\draw (0.8, 0)
node[above=-1.55pt] {$\down$}; 		\draw (1.2,0) node[above=-1.55pt] {$\up$}; 		\draw (1.6,0) node[above=-1.55pt]
{$\down$}; 		\draw (2,0) node[above=-1.55pt] {$\up$}; 		\draw (2.4, 0) node[above=-1.55pt] {$\down$}; 		\draw
(0,0) -- (0,-0.6); 		\draw (0.4,0) -- (0.4,-0.6); 	  \draw (0.8,0) arc (180:360:0.6); 		\draw (1.2,0) arc
(180:360:0.2); 		\draw (2.4,0) -- (2.4,-0.6);			\end{scope}
\end{tikzpicture}
\hspace{2cm}
\begin{tikzpicture}
\begin{scope}
		\draw (0,0) node[above=-1.55pt] {$\up$}; 		\draw (0.4,0) node[above=-1.55pt] {$\down$}; 		\draw (0.8, 0)
node[above=-1.55pt] {$\down$}; 		\draw (1.2,0) node[above=-1.55pt] {$\up$}; 		\draw (1.6,0) node[above=-1.55pt]
{$\down$}; 		\draw (2,0) node[above=-1.55pt] {$\up$}; 		\draw (2.4, 0) node[above=-1.55pt] {$\down$}; 		\draw
(2.4,0.4) -- (2.4,0.8); 		\draw (0,0.4) arc (180:0:0.2); 		\draw (0.8,0.4) arc (180:0:0.2); 		\draw (1.6,0.4)
arc (180:0:0.2); 		\draw (0,0) -- (0,-0.6); 		\draw (0.4,0) -- (0.4,-0.6); 	  \draw (0.8,0) arc (180:360:0.6); 		 \draw (1.2,0) arc (180:360:0.2); 		\draw (2.4,0) -- (2.4,-0.6);			\end{scope}
\end{tikzpicture}
\caption{An oriented cup diagram and an oriented circle diagram.} 	\label{fig:anOrientedDiagram}
\end{figure}
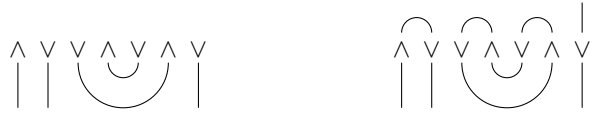

Similarly we can glue $\la$ to a cap diagram $c$. The result  $\la c$ is called {\it oriented cap diagram} if $c^*\la$ is an oriented cup diagram. A {\it circle
diagram} is obtained by gluing a cup and a cap diagram at the vertices $I$. It consists of circles and lines. Gluing an oriented cap diagram with an oriented cup diagram along the same weight gives an {\it oriented circle diagram}.
For an example, see Figure \ref{fig:anOrientedDiagram}.

The {\it degree} of an oriented cup/cap diagram $a\la$ (or $\la b$) means the total number of oriented cups (caps) that it
contains. So in $K_m^n$ one has $deg(a \la) \leq n$, since there are at most $n$ cups. The {\it degree} of an oriented
circle diagram $a\la b$ is defined as the sum of the degree of $a\la$ and the degree of $\la b$. The {\it cup diagram
associated to a weight $\la$} is the unique cup diagram $\underline{\la}$ such that $\underline{\la} \la$ is an oriented cup
diagram of degree 0. (For an explicit construction: Take any two neighboring vertices labeled by $\down \up$ and connect
them by a cup. Repeat this procedure as long as possible, ignoring vertices which are already joined to others. Finally draw
rays to all vertices which are left.) The {\it cap diagram associated to a weight $\la$} is defined as
$\overline{\la}:=(\underline{\la})^*$.
%\subsubsection{The algebra}
The vector space underlying $K_m^n$ has a basis
\begin{eqnarray*}
%\label{basis} 	
\left\{(a\la b) \left| \text{for all oriented circle diagrams with } \la \in \La_m^n\right\} \right. .
\end{eqnarray*}
Each basis vector has a well-defined degree, turning the vector space into a graded vector space equipped with a distinguished homogeneous basis. The element $e_\la$ is defined to be the diagram $\underline{\la}\la \overline{\la}$. The
product of two circle diagrams $a\la b$ and $c \mu d$ is zero except for $b=c^*$. The multiplication of $a \la b$ and $b^* \mu
d$ works by the rules of the generalized surgery procedure defined in \cite[Section 3 and Theorem 6.1.]{Brun12008}. The
vectors $\left\{ e_\alpha | \alpha \in \La_m^n \right\} $ form a complete set of mutually orthogonal idempotents in $K_m^n$.
We get
\begin{eqnarray*}
K_m^n&=&\bigoplus_{\alpha, \beta \in \La_m^n} e_\alpha K_m^n e_\beta
\end{eqnarray*}
where $e_\alpha K_m^n e_\beta$ has basis $\left\{(\und{\alpha} \la \ove{\beta}) \left| \lambda \in \La_m^n \text{ such that the diagram is oriented} \right\} \right..$

\subsection{Modules}
Theorem \ref{Cor:equcat} establishes an equivalence of categories between $\O^\p_0$ and the category of finite dimensional $K_m^n$-modules. Following \cite{Brun12008}, we consider the category $K_m^n-\gmod$ of finite dimensional {\it graded} left $K_m^n$-modules which can be seen as a graded version of $\O^\p_0$ with the following important objects:
\begin{itemize}
	\item The {\it simple modules} $L(\la)$ with $\la \in \La_m^n$. \\ 	These are $1$-dimensional modules concentrated in
degree zero. The idempotent $e_\la \in  K_m^n$ acts by the identity, all other $e_\mu$ by zero. Shifting the internal degree gives all simple objects, $L(\la)\langle i\rangle$, $i\in\mathbb{Z}$. 	
\item The {\it projective cover}
$P(\la)=K_m^n e_\la$ of the simple module $L(\la)$ has homogeneous basis
\begin{align*} 	\left\{ (\und{\alpha}\mu\ove{\la}) \left| \text{
for all } \alpha, \mu \in \La_m^n \text{ such that the diagram is oriented}\right\}; \right.\end{align*} 	with the
action induced from the diagrammatical multiplication in the algebra. By shifting the internal degree one obtains a full
set of indecomposable graded projective modules. 	\item The {\it cell or standard modules} $M(\mu)$ with homogeneous
basis 		
\begin{align*} 	\left\{ (c \mu | \:\big|\: \text{for all oriented cup diagrams $c
\mu$}\right\} 		\end{align*} such that $(a\la b)(c\mu |)=(a\mu |)$ or $0$ depending on the elements.
\end{itemize}
After forgetting the grading, these modules correspond via Corollary \ref{Cor:equcat} to simple modules, projectives and Verma modules
in the principal block of $O^\p$.
\subsection{q-decomposition numbers}
We have the following theorems about cell module filtrations of projectives and Jordan-H\"older filtrations of cell modules,
which say that $K_m^n$ is quasi-hereditary in the sense of Cline, Parshall and Scott \cite{Clin1988}.
\begin{Lemma}[{\cite[Theorem 5.1]{Brun12008}}]
\label{qh1} For $\la \in \La_m^n$, enumerate the elements of the set $\{\mu \in \La_m^n\:|\:\und{\la}\mu \text{ is
oriented}\}$ as $\mu_1,\mu_2,\dots,\mu_n = \la$ so that $\mu_i > \mu_j$ implies $i < j$. Let $M(0) := \{0\}$ and for
$i=1,\dots,n$ define $M(i)$ to be the subspace of $P(\la)$ generated by $M(i-1)$ and the vectors
$$
\left\{ (c \mu_i \overline{\la} ) \:\big|\: \text{for all oriented cup diagrams $c \mu_i$}\right\}.
$$
Then
$
M(0) \subset M(1) \subset\cdots\subset M(n) = P(\la)
$
is a filtration of $P(\la)$ as graded $K_m^n$-module such that
$M(i) / M(i-1) \cong M(\mu_i) \langle \deg(\mu_i \overline{\la})\rangle$
for $1\leq i\leq n$.
\end{Lemma}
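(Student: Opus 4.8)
The plan is to exhibit the filtration directly on the distinguished diagram basis and to identify each subquotient with a cell module via the map that appends the fixed cap diagram $\ove{\la}$. First I would record the homogeneous basis of $P(\la)=K_m^n e_\la$: it consists of the oriented circle diagrams $(\und{\alpha}\mu\ove{\la})$ in which the cap diagram $\ove{\la}$ is fixed, the cup diagram $\und{\alpha}$ is canonical, and only the middle weight $\mu$ varies. Since $\ove{\la}=(\und{\la})^*$, the cap diagram $\mu\ove{\la}$ is oriented precisely when $\und{\la}\mu$ is an oriented cup diagram, so the admissible middle weights are exactly the elements of $\{\mu\in\La_m^n\mid\und{\la}\mu\text{ oriented}\}$, i.e.\ the weights $\mu_1,\dots,\mu_n$ being enumerated. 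Hence $M(i)$ is nothing but the span of the basis vectors whose middle weight lies in $\{\mu_1,\dots,\mu_i\}$, and $0=M(0)\subset M(1)\subset\cdots\subset M(n)=P(\la)$ is automatically a chain of graded subspaces; the additivity $\deg(c\mu_i\ove{\la})=\deg(c\mu_i)+\deg(\mu_i\ove{\la})$ will account for the internal shift.

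Next I would introduce, for each $i$, the degree-preserving linear map
\[
\psi_i\colon M(\mu_i)\langle\deg(\mu_i\ove{\la})\rangle\longrightarrow M(i)/M(i-1),\qquad (c\mu_i|)\longmapsto (c\mu_i\ove{\la})+M(i-1),
\]
the index ranging over all oriented cup diagrams $c\mu_i$. By the first paragraph $\psi_i$ sends the distinguished homogeneous basis of the cell module bijectively onto the layer basis of the quotient, and the shift makes it degree-preserving, so $\psi_i$ is an isomorphism of graded vector spaces. It then remains to prove two module-theoretic statements simultaneously: that each $M(i)$ is a graded $K_m^n$-submodule, and that $\psi_i$ intertwines the $K_m^n$-actions.

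The heart of the argument, and the step I expect to be hardest, is the analysis of the action $(a\nu b)\cdot(c\mu_i\ove{\la})$ of a basis element $(a\nu b)\in K_m^n$. This product vanishes unless $b=c^*$, in which case it is computed by the generalized surgery procedure of \cite[Sections 3 and 6]{Brun12008} applied to the stacked diagram $(a\nu b)(b^*\mu_i\ove{\la})$, yielding a linear combination of oriented circle diagrams $(a'\mu'\ove{\la})$ that all share the fixed cap $\ove{\la}$. The claim to establish is an upper-triangularity statement in the Bruhat order: every middle weight $\mu'$ occurring satisfies $\mu'\geq\mu_i$, and there is a single contribution with $\mu'=\mu_i$ which, transported through $\psi_i$, reproduces the cell-module formula $(a\nu b)(c\mu_i|)=(a\mu_i|)$ or $0$. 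Granting this, all terms with $\mu'>\mu_i$ occur at strictly smaller index and hence lie in $M(i-1)$, so the action preserves the layer structure (giving $x\cdot M(i)\subseteq M(i)$) and, modulo $M(i-1)$, coincides with the cell-module action through $\psi_i$. Proving the upper-triangularity amounts to tracking, move by move, how each elementary merge and split in the surgery reorients the affected components and thereby alters the associated weight; the monotonicity in the Bruhat order and the uniqueness of the diagonal term are exactly where the combinatorics of degree-$0$ orientations and of the relative length $l_i(\la,\mu)$ from \eqref{rell} must be used with care.

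Finally I would assemble the pieces: the maps $\psi_i$ are then isomorphisms of graded $K_m^n$-modules, so $M(i)/M(i-1)\cong M(\mu_i)\langle\deg(\mu_i\ove{\la})\rangle$ for $1\le i\le n$, which is the assertion. As a consistency check, the top subquotient $M(n)/M(n-1)\cong M(\la)\langle\deg(\la\ove{\la})\rangle=M(\la)$ occurs with multiplicity one, since $\deg(\la\ove{\la})=0$, in agreement with the shape of a standard filtration of a projective in a quasi-hereditary algebra.
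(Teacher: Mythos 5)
First, a point of reference: the paper does not prove this statement at all --- it is imported verbatim as \cite[Theorem 5.1]{Brun12008} --- so the only meaningful comparison is with the proof given in that reference. Your outline reproduces exactly the architecture of that proof: identify $M(i)$ with the span of the distinguished basis vectors $(\und{\alpha}\mu\ove{\la})$ of $P(\la)=K_m^ne_\la$ whose middle weight lies in $\{\mu_1,\dots,\mu_i\}$, observe that $\mu\ove{\la}$ is oriented iff $\und{\la}\mu$ is, so the admissible middle weights are precisely the $\mu_j$ being enumerated, and match the $i$th layer basis with the basis $\{(c\mu_i|)\}$ of the cell module via the degree shift $\deg(\mu_i\ove{\la})$. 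All of that bookkeeping is correct (your phrase ``only the middle weight $\mu$ varies'' is loose, since the cup part $\und{\alpha}$ varies too, but you then use the basis correctly), and the direction of the triangularity you need ($\mu'\geq\mu_i$ forces a strictly smaller index, hence membership in $M(i-1)$) is the right one for the stated enumeration.

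The gap is that the entire mathematical content of the lemma sits in the step you label ``the claim to establish'' and then grant: that the surgery product $(a\nu b)(c\mu_i\ove{\la})$ is a linear combination of diagrams $(a'\mu'\ove{\la})$ with $\mu'\geq\mu_i$, with a unique diagonal term realizing the cell-module rule $(a\nu b)(c\mu_i|)=(a\mu_i|)$ or $0$. That triangularity of the multiplication is precisely the key structural theorem of \cite[\S 4]{Brun12008}, proved there by an induction over the individual surgery moves, checking that each merge or split either preserves the middle weight or moves a $\down$ strictly to the right (hence strictly up in the Bruhat order), and isolating the degree-preserving component. Your sketch names the right target but supplies no argument for it; moreover, your suggestion that the relative length $l_i(\la,\mu)$ of \eqref{rell} is the relevant tool is off the mark --- that invariant governs the Kazhdan--Lusztig polynomials $p_{\la,\mu}$ and the linear resolutions, not the triangularity of the surgery product. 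As written, the proposal reduces the lemma to an unproved assertion of essentially the same depth as the lemma itself.
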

\begin{Lemma}[{\cite[Theorem 5.2]{Brun12008}}]\label{qh2}
For $\mu \in \La_m^n$, let $N(j)$ be the submodule of $M(\mu)$ spanned by all graded pieces of degree $\geq j$. This defines a finite filtration of the graded $K_m^n$-module $M(\mu)$ with simple subquotients
$$
N(j) / N(j+1) \cong \bigoplus_{\substack{\la  \subset \mu\text{\,with}\\ \deg(\underline{\la} \mu) = j}}
  L(\la) \langle j \rangle.
$$
\end{Lemma}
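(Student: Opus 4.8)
The plan is to exploit that $K_m^n$ is non-negatively graded together with the explicit description of its degree-zero part. First I would observe that every cellular basis element $(a\la b)$ has degree equal to its number of oriented cups plus oriented caps, so $K_m^n$ is concentrated in degrees $\geq 0$. Since a homogeneous algebra element of degree $i$ raises the internal degree of a homogeneous module element by $i$, and $i\geq 0$, the subspace $N(j)=\bigoplus_{d\geq j}M(\mu)_d$ spanned by the graded pieces of degree $\geq j$ is closed under the $K_m^n$-action, i.e. a graded submodule. As $M(\mu)$ is finite dimensional and concentrated in finitely many nonnegative degrees, $N(0)\supseteq N(1)\supseteq\cdots$ is a finite filtration with $N(j)/N(j+1)\cong M(\mu)_j$ as graded vector spaces sitting in degree $j$.

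Next I would determine the module structure on each subquotient. In $N(j)/N(j+1)$ any homogeneous element of $K_m^n$ of positive degree acts as zero, since it maps $M(\mu)_j$ into $N(j+1)$; hence the algebra acts only through its degree-zero part $(K_m^n)_0$. Reading off the basis, a circle diagram $(a\la b)$ has degree $0$ iff both $a\la$ and $\la b$ have degree $0$, which forces $a=\und{\la}$ and $b=\ove{\la}$, i.e. $(a\la b)=e_\la$. Thus $(K_m^n)_0=\bigoplus_{\la}\C e_\la$ is a semisimple commutative algebra spanned by the orthogonal idempotents $e_\la$, and therefore $N(j)/N(j+1)$ splits as a direct sum of one-dimensional modules, one for each basis vector $(c\mu|)$ of degree $j$.

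It then remains to identify these summands with the claimed simples and to match the index set. For a basis vector $(c\mu|)$ the multiplication rule gives $e_\la\cdot(c\mu|)=(c\mu|)$ when $c=\und{\la}$ and $0$ otherwise, so $(c\mu|)$ spans a copy of $L(\la)$ in internal degree $j=\deg(c\mu)$, that is $L(\la)\langle j\rangle$. Finally I would set up the bijection between the basis of $M(\mu)_j$ and the indexing set of the statement: sending an oriented cup diagram $c\mu$ of degree $j$ to the unique weight $\la$ with $\und{\la}=c$ (equivalently, the weight making $c\la$ oriented of degree $0$) is well defined and bijective, because a cup diagram admits a unique degree-$0$ orientation. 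Its image is exactly the set of $\la$ with $\und{\la}\mu=c\mu$ oriented, which is the condition $\la\subset\mu$, subject to $\deg(\und{\la}\mu)=j$. Combining the three ingredients yields $N(j)/N(j+1)\cong\bigoplus_{\la\subset\mu,\,\deg(\und{\la}\mu)=j}L(\la)\langle j\rangle$.

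The main obstacle I anticipate lies in the last paragraph: pinning down the action of the idempotents $e_\la$ on the cell-module basis through the Brundan--Stroppel surgery multiplication, and verifying that $c\mapsto\la$ with $c=\und{\la}$ is a genuine bijection onto $\{\la:\la\subset\mu\}$ with matching degrees. These are precisely the points where one must invoke the combinatorics of orientations of cup diagrams rather than purely formal grading arguments, whereas the submodule property and the semisimplicity of $(K_m^n)_0$ are essentially automatic consequences of non-negativity of the grading.
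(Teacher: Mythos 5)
Your argument is correct, and it is essentially the standard proof: the paper itself gives no proof of this lemma but simply quotes it from \cite[Theorem 5.2]{Brun12008}, where the argument runs along exactly the lines you propose (non-negativity of the grading makes the degree filtration a module filtration, the positive part acts trivially on the subquotients so only the semisimple degree-zero part $\bigoplus_\la \C e_\la$ survives, and the basis vectors $(c\mu|)$ of degree $j$ are matched with the weights $\la$ having $\und{\la}=c$ via the unique degree-zero orientation). The two points you flag as needing the surgery combinatorics --- that $e_\la(c\mu|)=\delta_{c,\und{\la}}(c\mu|)$ and that $c\mapsto\la$ is a bijection onto $\{\la \subset \mu\}$ --- are indeed the only non-formal steps, and both are established in the cited reference.
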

By the BGG-reciprocity \cite[Theorem 9.8(f)]{Hump08} the two multiplicities $d_{\la,\mu}^i:=[M(\mu):L(\la)\langle i\rangle]$
and $[P(\la):M(\mu)\langle i \rangle]$ are equal and we get the symmetric \textit{q-Cartan matrix}
$$C_{\La_m^n}(q) = (c_{\la,\mu}(q))_{\la,\mu \in\La_m^n},$$
where
$$c_{\la,\mu}(q) := \sum_{j \in \Z} \dim \hom_{K_m^n}(P(\la),P(\mu))_j\; q^j \in \Z[q].$$
Set $d_{\la,\mu}(q)=\sum_{i}d^i_{\la,\mu}q^i$. Note that this sum in fact contains at most one non-trivial summand, since
$d_{\la,\mu}\not=0$ implies $\und{\la}\mu$ is oriented and $\la\leq\mu$ in the Bruhat ordering, in which case
$d_{\la,\mu}=q^{\operatorname{deg}(\und{\la}\mu)}$ holds (cf. \cite[5.12]{Brun12008}).

In a cup (cap) diagram we number the cups (caps) $1,2,\ldots$ according to their right vertex from left two right. For a cup (cap) diagram $a$ we denote by $\nes_a(i)$ for $1 \leq i \leq \# \{ \text{cups} \}$ the
number of cups nested in the $i$th cup.

The following provides then explicit
lower and upper bounds for the decomposition numbers and the entries of the $q$-Cartan matrix:
\begin{Prop}
\label{Le:condmaps} In $K_m^n-\gmod$ we have $d_{\la,\mu}=0$ unless
\begin{equation}
\label{inequ1} 0 \leq l(\la)-l(\mu) \leq n+2\sum_i \nes_{\und{\la}}(i) \leq n^2.
\end{equation}
In particular, $c_{\la, \mu}=0$ unless $l(\la)-l(\mu) \leq n+2\sum_i \nes_{\und{\la}}(i) \leq n^2.$
\end{Prop}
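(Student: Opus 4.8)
The plan is to reduce everything to the combinatorics of the fixed cup diagram $\und{\la}$. By the discussion preceding the statement, $d_{\la,\mu}\neq 0$ forces $\und{\la}\mu$ to be an oriented cup diagram with $d_{\la,\mu}=q^{\deg(\und{\la}\mu)}$, so it suffices to understand which weights $\mu$ orient $\und{\la}$ and to compute $l(\la)-l(\mu)$ for exactly those. First I would record the shape of the distinguished degree-$0$ orientation $\und{\la}\la$: by the explicit construction of $\und{\la}$ every cup is there labelled $\down\up$ from left to right, and the ray vertices carry labels $\up\cdots\up\down\cdots\down$. I would then show that the $\mu$ with $\und{\la}\mu$ oriented are precisely those obtained from $\la$ by flipping $\down\up\mapsto\up\down$ on an arbitrary subset $S$ of the cups. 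Indeed a flip preserves the number of $\up$'s (and $\down$'s) carried by cups, so $\mu$ must carry the same number of ray-$\up$'s as $\la$; the orientedness condition ``no two rays labelled $\down\up$'' then forces the ray labels of $\mu$ to coincide with those of $\la$, while on each cup the only admissible labellings are $\down\up$ (local degree $0$) and $\up\down$ (local degree $1$). In particular $\deg(\und{\la}\mu)=|S|$.

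Next I would compute $l(\la)-l(\mu)=\sum_{i\in I}l_i(\la,\mu)$ from the relative-length formula \eqref{rell}. A single flipped cup joining vertices $a<b$ replaces $\down$ at $a$ and $\up$ at $b$ (in $\la$) by $\up$ at $a$ and $\down$ at $b$ (in $\mu$); comparing the partial down-counts $l_i$ shows that this cup contributes $+1$ for each $i$ with $a\le i<b$ and $0$ otherwise, hence contributes exactly $b-a$ to the total. Since distinct cups have disjoint vertex sets, these contributions add independently and $l(\la)-l(\mu)=\sum_{c\in S}(b_c-a_c)$. As each cup satisfies $b_c>a_c$, this expression is $\geq 0$ and vanishes iff $S=\emptyset$, which already establishes the left-hand inequality $0\le l(\la)-l(\mu)$.

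For the middle inequality I would use that the vertices lying strictly under a cup $c=(a_c,b_c)$ of $\und{\la}$ can only be endpoints of cups nested inside $c$: a ray, or a cup with just one endpoint under $c$, would have to cross $c$. Hence $b_c-a_c-1=2\,\nes_{\und{\la}}(c)$, so $b_c-a_c=2\,\nes_{\und{\la}}(c)+1$, and summing over $c\in S$ gives $l(\la)-l(\mu)=|S|+2\sum_{c\in S}\nes_{\und{\la}}(c)$. Bounding $|S|$ by the number of cups, which is at most $n$ since each cup carries exactly one of the $n$ available $\up$'s, and bounding $\sum_{c\in S}\nes_{\und{\la}}(c)$ by the total nesting $\sum_i\nes_{\und{\la}}(i)$, yields $l(\la)-l(\mu)\le n+2\sum_i\nes_{\und{\la}}(i)$. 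The last inequality $n+2\sum_i\nes_{\und{\la}}(i)\le n^2$ then follows because $\sum_i\nes_{\und{\la}}(i)$ is the number of nested pairs among the $k\le n$ cups, hence at most $\binom{k}{2}\le\binom{n}{2}$, and $n+2\binom{n}{2}=n^2$.

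Finally, for the statement about $c_{\la,\mu}$ I would invoke the graded BGG reciprocity factorisation $c_{\la,\mu}(q)=\sum_{\nu}d_{\la,\nu}(q)\,d_{\mu,\nu}(q)$ (up to grading shifts), which comes from the cell-module filtration of $P(\mu)$ together with $\dim\hom(P(\la),M(\nu))=d_{\la,\nu}$. If $c_{\la,\mu}\neq 0$ there is a $\nu$ with $d_{\la,\nu}\neq 0$ and $d_{\mu,\nu}\neq 0$; the first part gives $l(\la)-l(\nu)\le n+2\sum_i\nes_{\und{\la}}(i)$ and $l(\mu)-l(\nu)\ge 0$, so $l(\la)-l(\mu)=\bigl(l(\la)-l(\nu)\bigr)-\bigl(l(\mu)-l(\nu)\bigr)\le n+2\sum_i\nes_{\und{\la}}(i)\le n^2$. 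The step I expect to be the \emph{main obstacle} is the very first one, namely proving rigorously that orientability of $\und{\la}\mu$ forces the ray labels to agree with those of $\la$ and leaves only the cup-flip freedom; once this structural fact is in hand, the remainder is bookkeeping, and the only delicate points are keeping the index conventions for $l_i$ and the off-by-one count $b_c-a_c=2\,\nes_{\und{\la}}(c)+1$ consistent.
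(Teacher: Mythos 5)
Your proposal is correct and follows essentially the same route as the paper: both reduce to the fact that $\und{\la}\mu$ oriented means $\mu$ is obtained from $\la$ by flipping some cups, observe that flipping the $i$th cup changes the length by $1+2\nes_{\und{\la}}(i)$, bound the total nesting by $\binom{n}{2}$, and deduce the $c_{\la,\mu}$ statement by factoring through a common $M(\nu)$ in $P(\mu)$. The only difference is one of detail: you derive the lower bound $0\le l(\la)-l(\mu)$ and the classification of orienting weights directly from the diagram combinatorics, where the paper cites a lemma from Brundan--Stroppel and only treats the extremal $\mu$.
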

\begin{proof}
Assume $d_{\la,\mu}(q)\neq 0$. This means that $\und{\la}\mu$ is oriented. By \cite[Lemma 2.3]{Brun12008} it follows that $\la
\leq \mu$ in the Bruhat ordering, which leads to $l(\la) \geq l(\mu)$. Now we find $\la$ and $\mu$ such that $l(\la)-l(\mu)$
is maximal and $\und{\la} \mu$ is oriented. Fix such $\la$ and consider weights $\mu$ of smallest possible length such that
$\und{\la}\mu$ is still oriented. This is obtained if all $\up$'s and $\down$'s on the end of a cup in $\la$ are interchanged.
Since a $\up$ on the $i$th cup has been moved $1+2 \nes_{\und{\la}}(i)$ positions to the right, the length is changed by
$\sum_i(2 \nes_{\und{\la}}(i)+1)$. Therefore, we obtain
$$0 \leq l(\la)-l(\mu) \leq n+2\sum_i \nes_{\und{\la}}(i).$$
Since $\sum_i \nes_{a}(i)$ is maximal if all cups are nested (i.e if the $j$th cup contains precisely $j-1$ cups). In that
case we obtain
$$2\sum_i \nes_{a}(i)=2\sum_{i=1}^n (i-1)=(n-1)n$$ and therefore \eqref{inequ1} holds.
For $c_{\la, \mu} \neq 0$ a simple $L(\la)$ must occur in $P(\mu)$, especially it must occur in some $M(\nu)$, i.e.
$d_{\la,\nu}\neq 0$ and $d_{\mu,\nu}\neq 0$. Therefore,
$$l(\la)-l(\nu) \leq n+2\sum_i \nes_{\und{\la}}(i)$$
and $0 \leq l(\mu)-l(\nu)$ which implies
$$l(\la)-l(\mu) \leq l(\la)-l(\nu) \leq n+2\sum_i \nes_{\und{\la}}(i),$$
which proves the second inequality.
\end{proof}
\subsection{Linear projective resolutions of cell modules}\label{End}
To compute the Ext-algebras of Verma modules it will be useful to construct explicitly linear projective resolutions of the cell modules $M(\la)\in K_m^n-\gmod$. Recall that a projective resolution $P_\bullet$ is {\it linear} if $P_i$ is generated by its homogeneous component in degree $i$. From the description of projective modules it is clear that
$\bigoplus_{\la\in\La_m^n} P(\la)\cong K_m^n$ is a minimal projective generator of $K_m^n-\mod$.
Any endomorphism is given by right multiplication with an element of the
algebra, and
$\hom_{K_m^n}(P(\la), P(\mu)) = \hom_{K_m^n} (K_m^n e_\la, K_m^n e_\mu)= e_\la K_m^n e_\mu$
as vector spaces, \cite[(5.9)]{Brun12008}.

To construct the differentials in linear projective resolutions, we study first the degree $1$ component of $\hom_{K_m^n}(P(\la), P(\mu))$, i.e. we search for elements $\nu$ s.t.
$\deg(\und{\la}\nu\ove{\mu})=1$. Since $1=\deg(\und{\la}\nu\ove{\mu})=\deg(\und{\la}\nu)+\deg(\nu\ove{\mu})$, one summand
has to be $0$ and the other $1$.
\begin{enumerate}
	\item $\deg(\und{\la}\nu)=0$, i.e. $\la=\nu$, so we look for an oriented cap diagram $\la\ove{\mu}$ of degree $1$. It
exists iff $\la>\mu$ and $\mu= \la.w$ with $w$ changing the $\up$ and $\down$ (in this ordering) at the end of a cup into
a $\down$ and $\up$. 		\item $\deg(\nu \ove{\mu})=0$, i.e. $\mu=\nu$, so we look for an oriented cup diagram
$\und{\la}\mu$ of degree 1. It exists iff $\mu>\la$ and $\la=  \mu.w$ with $w$ changing the $\down$ and $\up$ at the end
of a cap.
\end{enumerate}
Altogether we get $\dim\;\hom_{K_m^n}(P(\la), P(\mu))_1\leq 1$ and the diagram calculus defines a distinguished morphism $f_{\la,\mu}$ in case this dimension equals $1$.

On the other hand, the modules occurring in a linear projective resolution of cell modules are determined by polynomials $p_{\la,\mu}$ defined diagrammatically and recursively in \cite[Lemma 5.2.]{Brun22008}, namely certain Kazhdan-Lusztig polynomials going back to work of Lascoux and Sch\"utzenberger \cite{Lasc1981}.

We recall the construction of these polynomials. Set $p_{\la,\mu}=0$ if $\la \not\le
\mu$. A \emph{labeled cap diagram} C is a cap diagram whose unbounded chambers are labeled by zero and given two chambers
separated by a cap, the label in the inside chamber is greater than or equal to the label in the outside chamber.
\begin{Def}
Denote by $D(\la,\mu)$ the set of all labeled cap diagrams obtained by labeling the chambers of $\ove{\mu}$ in such a way that for every inner cap $c$ (a cap containing no smaller one), the label $l$ inside $c$ satisfies
$l\leq l_i(\la,\mu)$, where $i$ denotes the vertex of $c$ labeled by $\down$.  The
polynomials are given by
\begin{eqnarray}
\label{KL}
p_{\la,\mu}(q) :=\sum_i p_{\la,\mu}^{(i)}q^i:= q^{l(\la)-l(\mu)}\sum_{C \in D(\la,\mu)} q^{-2|C|}.
\end{eqnarray}
where $\left|C\right|$ denotes the sum of all labels in $C$.
\end{Def}
\begin{Example}
Figure \ref{fig:kaz} presents the possible labeled cap diagrams from $D(\la, \mu)$ for the chosen $\la$ and
$\mu$. Since $l(\la)-l(\mu)=4$, we get
$p_{\la,\mu}(q)=q^{4}+q^{2}$. 		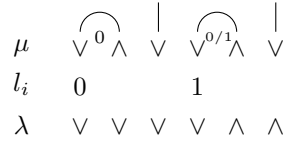
\begin{figure} 		 \center 						
 		\begin{tikzpicture}[scale=1.3] 		\draw (-0.2,0) node[above=-1.55pt] {$\mu$}; 		\draw (0.4,0)
node[above=-1.55pt] {$\down$}; 		\draw (0.8, 0) node[above=-1.55pt] {$\up$}; 		\draw (1.2,0) node[above=-1.55pt]
{$\down$}; 		\draw (1.6,0) node[above=-1.55pt] {$\down$}; 		\draw (2,0) node[above=-1.55pt] {$\up$}; 		\draw
(2.4, 0) node[above=-1.55pt] {$\down$}; 		\draw (0.8,0.3) arc (0:180:0.2); 		 \draw (1.2, 0.3) -- (1.2, 0.6); 		 
\draw (2,0.3) arc (0:180:0.2); 		\draw (2.4, 0.3) -- (2.4, 0.6); 		
\begin{scope}[yshift=-0.4cm] 		\draw (-0.2,0) node[above=-1.55pt] {$l_i$}; 		\draw (0.4,0) node[above=-1.55pt]
{$0$}; 		\draw (1.6,0) node[above=-1.55pt] {$1$}; 		\end{scope} 		\begin{scope}[yshift=0.15cm] 		
\scriptsize			\draw (0.6,0) node[above=-1.55pt] {$0$}; 		\tiny 		\draw (1.82,0) node[above=-1.55pt] {$0/1$};			 

\end{scope}			\begin{scope}[yshift=-0.8 cm] 		\draw (-0.2,0) node[above=-1.55pt] {$\la$}; 		\draw (0.4,0)
node[above=-1.55pt] {$\down$}; 		\draw (0.8,0) node[above=-1.55pt] {$\down$}; 		\draw (1.2, 0) node[above=-1.55pt]
{$\down$}; 		\draw (1.6,0) node[above=-1.55pt] {$\down$}; 		\draw (2.0,0) node[above=-1.55pt] {$\up$}; 		\draw
(2.4,0) node[above=-1.55pt] {$\up$}; 		\end{scope} 		\end{tikzpicture}
\caption{the labeled cap diagram}
\label{fig:kaz}
\end{figure}
\end{Example}

\begin{Theorem}[{\cite[Theorem 5.3]{Brun22008}}, {\cite[Theorem 3.20]{Klam10}}]
\label{Th:constrproj}
For $\la \in \La_m^n$ the cell module $M(\la)$ has a linear projective resolution $P_\bullet(\la)$ of the form
\begin{equation}
\label{profres} \cdots \stackrel{d_1}{\longrightarrow} P_{1}(\la) \stackrel{d_0}{\longrightarrow} P_{0}(\la)
\stackrel{\eps}{\longrightarrow} M(\la) \longrightarrow 0
\end{equation}
with $P_0(\la)= P(\la)$ and $P_i(\la)= \bigoplus_{\mu \in \La_m^n} p_{\la,\mu}^{(i)} P(\mu)\langle i \rangle$ for $i \geq
0$.
\end{Theorem}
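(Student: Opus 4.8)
The plan is to construct the resolution by induction on the homological degree, assembling every differential out of the distinguished degree-one morphisms $f_{\cdot,\cdot}$ identified in the discussion preceding the theorem, since these are (up to scalar) the only homogeneous maps available between indecomposable projectives in internal degree one. For the base case, Lemma~\ref{qh1} applied with $\mu_n=\la$ and $\deg(\la\ove{\la})=0$ exhibits $M(\la)$ as the top cell subquotient $M(n)/M(n-1)$ of $P(\la)$; hence the augmentation $\eps\colon P_0(\la)=P(\la)\twoheadrightarrow M(\la)$ has kernel $M(n-1)$, which carries a cell filtration by the $M(\mu_j)\langle\deg(\mu_j\ove{\la})\rangle$ with $\mu_j>\la$. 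First I would verify that this kernel is generated in internal degree one and that the multiplicity of each $P(\mu)\langle1\rangle$ in its projective cover equals $p_{\la,\mu}^{(1)}$; the candidate differential $d_0$ is then the matrix of distinguished maps $f_{\mu,\la}$.

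For the inductive step, assume the linear complex has been built through $P_i(\la)=\bigoplus_\mu p_{\la,\mu}^{(i)}P(\mu)\langle i\rangle$ with all differentials composed of degree-one maps. Linearity forces the next syzygy $\ker d_{i-1}$ to be generated in internal degree $i+1$, so its projective cover is pinned down by the degree-$(i+1)$ part of its top, which in turn is read off from the degree-one homomorphisms emanating from the summands of $P_i(\la)$. I would match the resulting multiplicities with $p_{\la,\mu}^{(i+1)}$ using the recursive, diagrammatic definition of the $p_{\la,\mu}$ via the labeled cap diagrams $D(\la,\mu)$: passing from homological step $i$ to $i+1$ corresponds combinatorially to adjoining a cap or raising a label, which is exactly the mechanism by which the coefficients $p_{\la,\mu}^{(i)}$ propagate. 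The differential $d_i$ is again a matrix in the $f_{\cdot,\cdot}$, with signs chosen so that $d_id_{i-1}=0$.

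To certify that the resulting complex has the correct terms, I would pass to the graded Grothendieck group. Using the cell filtration of Lemma~\ref{qh1} together with the BGG-reciprocity identity $[P(\mu)]=\sum_\nu d_{\mu,\nu}(q)[M(\nu)]$, the Euler characteristic $\sum_i(-1)^i[P_i(\la)]$ equals $\sum_\nu\bigl(\sum_\mu p_{\la,\mu}(-q)\,d_{\mu,\nu}(q)\bigr)[M(\nu)]$. Exactness with homology concentrated in $M(\la)$ is therefore equivalent to the Kazhdan--Lusztig inversion identity $\sum_\mu p_{\la,\mu}(-q)\,d_{\mu,\nu}(q)=\delta_{\la,\nu}$, asserting that the matrices $\bigl(p_{\la,\mu}(-q)\bigr)$ and $\bigl(d_{\mu,\nu}(q)\bigr)$ are mutually inverse; this holds for the parabolic Kazhdan--Lusztig polynomials of the Hermitian symmetric case by the Lascoux--Sch\"utzenberger combinatorics underlying the definition of $p_{\la,\mu}$.

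The main obstacle is that the Grothendieck-group computation only controls the Euler characteristic: it certifies the multiplicities but neither the vanishing of homology in positive homological degree nor linearity. The genuinely hard, combinatorial part is to show that each syzygy is generated in a single internal degree (the expected one), so that no composition factor survives in the wrong degree. I would close this gap by induction, using the recursion for $p_{\la,\mu}$ together with Lemma~\ref{qh2} and the length bounds of Proposition~\ref{Le:condmaps} to track the internal degrees of the composition factors of $\ker d_{i-1}$ and confirm that $\im d_i$ exhausts them. Alternatively, since $K_m^n$ is Koszul, standard modules automatically possess linear projective resolutions, and the whole argument reduces to identifying the graded Betti numbers $\dim\Ext^i(M(\la),L(\mu)\langle i\rangle)$ with the coefficients $p_{\la,\mu}^{(i)}$; this conceptual shortcut trades the direct diagrammatic bookkeeping for the (independently established) Koszulity of $K_m^n$.
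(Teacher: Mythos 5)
You should first note that the paper does not actually prove Theorem~\ref{Th:constrproj}: it is imported wholesale from \cite[Theorem 5.3]{Brun22008} and \cite[Theorem 3.20]{Klam10}, and the text immediately following the statement records that the resolution there is built by a simultaneous induction on the underlying algebra $K_m^n$ \emph{and} on the highest weight, transporting linear resolutions through (special) projective functors. In that approach the identity between the multiplicities and the polynomials $p_{\la,\mu}$ is an output of the construction; in yours it is the engine, via the Euler characteristic in the graded Grothendieck group together with the Kazhdan--Lusztig inversion $\sum_\mu p_{\la,\mu}(-q)\,d_{\mu,\nu}(q)=\delta_{\la,\nu}$. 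That is a genuinely different route, and you correctly isolate where it becomes hard.

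The problem is that the hard part is exactly what is left unproved, and the two ways you offer to close the gap do not work as stated. The Grothendieck-group identity controls only the alternating sum, so it can neither exclude cancelling pairs $P(\mu)\langle j\rangle$ in consecutive homological degrees with the wrong internal shift, nor certify exactness; everything rests on showing each syzygy is generated in a single internal degree. Your direct combinatorial argument (``adjoining a cap or raising a label \ldots is exactly the mechanism by which the coefficients propagate'') is a restatement of the desired conclusion, not a proof. The proposed shortcut via Koszulity is worse: Koszulity of a graded quasi-hereditary algebra concerns linear resolutions of the \emph{simple} modules and does not formally imply that the standard modules admit linear resolutions (that is the strictly stronger ``standard Koszul'' property); moreover, in \cite{Brun22008} the logical order is reversed --- the linear resolutions of cell modules in Theorem 5.3 are an input to the proof that $K_m^n$ is Koszul --- so invoking Koszulity here is circular relative to the only available source. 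Finally, you assemble the differentials from the distinguished degree-one maps $f_{\mu,\nu}$ ``with signs chosen so that $d_id_{i-1}=0$''; the existence of a consistent sign assignment is a nontrivial assertion about the degree-two relations of $K_m^n$ (the composite is a sum over length-two paths whose cancellation must be checked against the surgery multiplication), and it is precisely the kind of verification the inductive construction in the cited proof is designed to supply.
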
	

Using the above observations and tools from the proof of \cite[Theorem 5.3]{Brun22008}, \cite[§3.3.3]{Klam10} gives an explicit method to construct projective resolutions of cell modules in $K_m^n-\gmod$ by an interesting simultaneous induction
varying the underlying algebra and the highest weights. For $K_{m}^0$ and $K_0^n$ we have, up to isomorphism, only
one indecomposable module, which is projective, simple and cell module at once. This provides the starting point of the
induction. In the following we will fix such a projective resolution $P_\bullet(\la)$  for each $\la$. Together with the
inequalities obtained before, we can deduce:
\begin{Prop}\label{Le:termprojres}
If a projective module $P(\nu)$ occurs as a direct summand in $P_i(\la)$ with $P_\bullet(\la)$ being the projective resolution
constructed above, one has
\begin{align*}
l(\la)-i-\left(n^2-n-2\sum_i \nes_{\und{\nu}}(i)\right) \leq l(\nu) \leq l(\la) -i.
\end{align*}
\end{Prop}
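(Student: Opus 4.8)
The plan is to read the multiplicity of $P(\nu)$ in $P_i(\la)$ directly off Theorem \ref{Th:constrproj} and then reduce everything to the combinatorics of the labeled cap diagrams computing the polynomials $p_{\la,\nu}$. By Theorem \ref{Th:constrproj} we have $P_i(\la)=\bigoplus_{\mu} p^{(i)}_{\la,\mu}P(\mu)\langle i\rangle$, so $P(\nu)$ occurs in $P_i(\la)$ precisely when $p^{(i)}_{\la,\nu}\neq 0$. All coefficients in the defining formula for $p_{\la,\nu}$ are non-negative, so no cancellation occurs and $p^{(i)}_{\la,\nu}\neq 0$ means exactly that there is a labeled cap diagram $C\in D(\la,\nu)$ with
\begin{equation*}
i=l(\la)-l(\nu)-2|C|,\qquad\text{i.e.}\qquad l(\nu)=l(\la)-i-2|C|.
\end{equation*}
Since every label is non-negative, $|C|\geq 0$, and this immediately yields the upper bound $l(\nu)\leq l(\la)-i$.

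For the lower bound it suffices to establish the purely combinatorial estimate
\begin{equation*}
|C|+\sum_j \nes_{\und{\nu}}(j)\ \leq\ \binom{n}{2}=\tfrac12(n^2-n),
\end{equation*}
because then $l(\nu)=l(\la)-i-2|C|\geq l(\la)-i-\bigl(n^2-n-2\sum_j\nes_{\und{\nu}}(j)\bigr)$. Here I use that $\und{\nu}$ and $\ove{\nu}$ are mirror images, so $\sum_j\nes_{\und{\nu}}(j)$ counts the nested pairs of caps in $\ove{\nu}$. First I would bound each chamber label separately: writing $L_c$ for the label of the chamber directly below a cap $c$ of $\ove{\nu}$ whose $\down$-vertex sits at position $i_c$, I claim $L_c\leq\#\{\up\text{'s of }\nu\text{ strictly left of }i_c\}$. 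For an innermost cap this is the defining constraint $L_c\leq l_{i_c}(\la,\nu)$ of $D(\la,\nu)$ together with $l_{i_c}(\la,\nu)\leq\#\{\up\text{'s of }\nu\text{ left of }i_c\}$, which holds because the number of $\down$'s of $\la$ among the positions $\leq i_c$ cannot exceed the total number of such positions.

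The subtle point, and the step I expect to be the main obstacle, is propagating this per-cap bound from innermost to arbitrary caps, where a priori only the monotonicity $L_c\leq L_{c'}$ for a nested inner cap $c'$ is available. Reading $\down$ as an opening and $\up$ as a closing bracket identifies $\ove{\nu}$ with a bracket matching; along the leftmost descending chain from $c$ to an innermost cap $c'$ every intermediate vertex is a $\down$, so there is no $\up$ strictly between $i_c$ and $i_{c'}$ and hence $\#\{\up\text{'s left of }i_{c'}\}=\#\{\up\text{'s left of }i_c\}$. The innermost bound therefore propagates outward and gives $L_c\leq\#\{\up\text{'s left of }i_c\}$ for every cap. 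Summing over all caps turns $|C|$ into a count of pairs $(c,u)$ of a cap $c$ and an $\up$-vertex $u$ lying to its left; sorting these into the cases where $u$ is the right endpoint of a cap disjoint to the left of $c$ (which contributes exactly the disjoint cap pairs) and where $u$ is a ray yields $|C|\leq\#\{\text{disjoint cap pairs}\}+k\,r$ with $k$ the number of caps and $r=n-k$ the number of $\up$-rays. Adding $\sum_j\nes_{\und{\nu}}(j)=\#\{\text{nested cap pairs}\}$ and using $\#\{\text{disjoint}\}+\#\{\text{nested}\}=\binom{k}{2}$ finishes the estimate, since $\binom{k}{2}+k\,r\leq\binom{k}{2}+k\,r+\binom{r}{2}=\binom{k+r}{2}=\binom{n}{2}$.
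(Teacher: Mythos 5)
Your proof is correct and follows essentially the same route as the paper: both reduce the claim to the estimate $0\le |C|\le \tfrac{1}{2}(n^2-n)-\sum_j\nes_{\und{\nu}}(j)$ for labeled cap diagrams $C\in D(\la,\nu)$, obtained by bounding the label under the cap ending at the $j$th $\up$ by $j-1-\nes_{\und{\nu}}(k_j)$, i.e.\ the number of $\up$'s to the left of its $\down$-endpoint. The only differences are that you explicitly justify extending the defining constraint (imposed only on innermost caps) to all caps via the chain of nested caps with consecutive $\down$-endpoints --- a step the paper's proof leaves implicit --- and that you organize the final summation as a pair count rather than summing $j-1-\nes_{\und{\nu}}(k_j)$ directly; both are sound.
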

\begin{proof}
Let $C$ be a cap connected with the $j$th $\up$
occurring in $\ove{\nu}$ and let it be the $k_j$th cup in our numbering with starting point $i$. Recall from \eqref{rell} that $l_i(\la, \nu)\leq\{k|\:k \leq i \text{ and vertex $k$ of $\nu$ is labeled $\up$}\}$, the latter
counting the numbers of $\up$'s to the left of the cap. This equals $j-1-\nes_{\und{\nu}}(k_j)$ counting to ones the left of the $j$th $\up$ without those lying inside the cap, and thus
\begin{eqnarray*}
0 \leq |C| &\leq \displaystyle{\sum_{\substack{j \in \{ 1,\ldots n\}\\ \text{cap ending on $j$th
$\up$}}}}{(j-1-\nes_{\und{\nu}}(k_j))}\leq\frac{n(n-1)}{2}-\sum_i \nes_{\und{\nu}}(i).
\end{eqnarray*}
If a module $P(\nu)$ occurs in the resolution (say at homological degree $i$), one has $p_{\la,\nu}^{(i)} >0$, i.e. there is a
diagram $C$ such that $i=l(\la)-l(\nu)-2|C|$. Taking the upper and lower bound for $C$ obtained before, one gets
$$l(\la)-i-(n^2-n-2\sum_i \nes_{\und{\nu}}(i)) \leq l(\nu) \leq l(\la) -i$$
and the claim of the proposition follows.
\end{proof}										
The following is a vanishing result for $\Ext^k(M(\la),M(\mu))$:
\begin{Lemma}\label{Le:mapprojres}
For $\la$, $\mu \in \La_m^n$ we have
\begin{align}
\hom^k(P_\bullet(\la), P_\bullet(\mu))=0 \text{ unless } l(\la) \leq l(\mu)+n^2+k. \label{inequ}
\end{align}
\end{Lemma}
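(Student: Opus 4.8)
The plan is to reduce the vanishing of the cochain group to the vanishing of individual $\hom$-spaces between indecomposable projectives, and then to play the length bound coming from the $q$-Cartan matrix (Proposition \ref{Le:condmaps}) against the length bound for the terms of the resolution (Proposition \ref{Le:termprojres}). By the description of the differential graded $\hom$-complex in Section \ref{ch:NotHom} we have $\hom^k(P_\bullet(\la),P_\bullet(\mu))=\prod_p\hom(P_p(\la),P_{p+k}(\mu))$, so it suffices to show that each factor vanishes whenever $l(\la)>l(\mu)+n^2+k$. Writing $P_p(\la)=\bigoplus_\nu p_{\la,\nu}^{(p)}P(\nu)\langle p\rangle$ and $P_{p+k}(\mu)=\bigoplus_\sigma p_{\mu,\sigma}^{(p+k)}P(\sigma)\langle p+k\rangle$ by Theorem \ref{Th:constrproj}, the factor $\hom(P_p(\la),P_{p+k}(\mu))$ decomposes (up to internal degree shifts, which are irrelevant for (non)vanishing) into summands $\hom_{K_m^n}(P(\nu),P(\sigma))$ with $P(\nu)$ a summand of $P_p(\la)$ and $P(\sigma)$ a summand of $P_{p+k}(\mu)$. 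Thus it is enough to show that each such summand is zero.

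First I would record that $\hom_{K_m^n}(P(\nu),P(\sigma))\neq 0$ forces $c_{\nu,\sigma}\neq 0$, since $c_{\nu,\sigma}(q)$ is by definition the graded dimension of this space. Proposition \ref{Le:condmaps} then yields the crucial inequality $l(\nu)-l(\sigma)\leq n+2\sum_i\nes_{\und{\nu}}(i)$. Next I would feed in the two length bounds from Proposition \ref{Le:termprojres}: since $P(\nu)$ occurs in $P_p(\la)$, the lower bound gives $l(\la)-p\leq l(\nu)+n^2-n-2\sum_i\nes_{\und{\nu}}(i)$, and since $P(\sigma)$ occurs in $P_{p+k}(\mu)$, the upper bound gives $l(\sigma)\leq l(\mu)-(p+k)$.

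Combining these three estimates gives
\begin{align*}
l(\la)-l(\mu)-k &= (l(\la)-p)-(l(\mu)-p-k)\\
&\leq \bigl(l(\nu)+n^2-n-2\sum_i\nes_{\und{\nu}}(i)\bigr)-l(\sigma)\\
&= \bigl(l(\nu)-l(\sigma)\bigr)+n^2-n-2\sum_i\nes_{\und{\nu}}(i)\\
&\leq n^2,
\end{align*}
where the final step substitutes the Cartan inequality above. Hence $l(\la)\leq l(\mu)+n^2+k$ whenever the summand is nonzero, which is the contrapositive of the claim.

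The main obstacle, and really the point of the argument, is that a naive estimate using only the symmetric bound $|l(\nu)-l(\sigma)|\leq n^2$ loses a factor of roughly $n^2$ and is too weak. What makes the proof go through is that the nesting correction $2\sum_i\nes_{\und{\nu}}(i)$ appearing in the sharp Cartan bound of Proposition \ref{Le:condmaps} is exactly the term that also appears, with opposite sign, in the length bound for the resolution in Proposition \ref{Le:termprojres}; the two nesting contributions cancel and leave the clean constant $n^2$. I would therefore take care to keep the $\nes$-refined bounds throughout and resist passing to the coarser $n^2$ estimates prematurely, as the whole estimate hinges on this cancellation.
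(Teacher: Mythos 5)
Your strategy is exactly the paper's: reduce to $\hom$-spaces between the indecomposable summands, combine the two length bounds of Proposition \ref{Le:termprojres} with the refined Cartan bound of Proposition \ref{Le:condmaps}, and let the two nesting terms cancel; you also correctly identify that this cancellation is the whole point. However, there is a sign problem in your setup that is then hidden by an arithmetic slip. A degree-$k$ element of the $\hom$-complex represents a chain map $P_\bullet(\la)\to P_\bullet(\mu)[k]$, and with the homological indexing of Theorem \ref{Th:constrproj} (where $P_0$ surjects onto the cell module) such a map \emph{lowers} the resolution index: its components go $P_p(\la)\to P_{p-k}(\mu)$, as in the paper's own proof. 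You instead take $P_p(\la)\to P_{p+k}(\mu)$ (reading the formula $\hom^r=\prod_p\hom(P_p,Q_{p+r})$ literally against the lower indexing of the resolutions), so your upper bound reads $l(\sigma)\leq l(\mu)-(p+k)$. With that bound the honest arithmetic gives $(l(\la)-p)-(l(\mu)-p-k)=l(\la)-l(\mu)+k$, not $l(\la)-l(\mu)-k$ as claimed in the first line of your display; carried through, this yields $l(\la)\leq l(\mu)+n^2-k$, which is not the statement of the lemma (and is false as a claim about degree-$k$ chain maps, since for instance $\Ext^{l(\la)-l(\mu)}(M(\la),M(\mu))$ can be nonzero). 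The two errors cancel and you land on the right inequality, but the displayed chain of estimates is broken at its first equality.

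The fix is purely notational: take the target summand $P(\sigma)$ inside $P_{p-k}(\mu)$, so that Proposition \ref{Le:termprojres} gives $l(\sigma)\leq l(\mu)-(p-k)$ and then $(l(\la)-p)-\bigl(l(\mu)-(p-k)\bigr)=l(\la)-l(\mu)-k$; the rest of your computation goes through verbatim and coincides with the paper's proof. Everything else --- the reduction to indecomposable summands, the observation that $\hom_{K_m^n}(P(\nu),P(\sigma))\neq 0$ forces $c_{\nu,\sigma}\neq 0$, and the insistence on keeping the $\nes$-refined bounds until the end --- is correct.
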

\begin{proof}
A map between $P_\bullet(\la)$ and $P_\bullet(\mu)[k]$ is in each component a morphism between graded projective modules.
Including the shift we therefore have to consider morphisms between projectives $P(\nu)$ occurring in $P_i(\la)$ and
projectives $P(\nu')$ in $P_{i-k}(\mu)$. By Proposition \ref{Le:termprojres} we know
\begin{eqnarray*}
l(\la)-i-\left(n^2-n-2\sum_i \nes_{\und{\nu}}(i)\right) \leq l(\nu)
&and&
l(\nu') \leq l(\mu)-(i-k).
\end{eqnarray*}
Therefore, we have
\begin{equation}\label{1stnu}
l(\la)-i-\left( n^2-n-2\sum_i \nes_{\und{\nu}}(i)\right)-\left(l(\mu)-(i-k)\right)\leq l(\nu)-l(\nu').
\end{equation}
Since we have a morphism between these projectives we get from Lemma \ref{Le:condmaps}
\begin{equation}\label{2ndnu}
l(\nu)-l(\nu') \leq n+ 2\sum_i \nes_{\und{\nu}}(i).\end{equation} Combining the two inequalities \eqref{1stnu} and
\eqref{2ndnu}, we obtain
\begin{equation*}
l(\la)-i-\left( n^2-n-2\sum_i \nes_{\und{\nu}}(i)\right)-\left(l(\mu)-(i-k)\right) \leq n+2\sum_i
\nes_{\und{\nu}}(i),\end{equation*} which implies $l(\la) \leq l(\mu)+n^2+k$.
The claim follows.
\end{proof}															

\section{The $\Ext$-algebra of $\bigoplus_{x\in W^\p} M(\la_0\cdot x)$}\label{ch:Ext}			
Assume we are in the setup of Section \ref{ch:Khov} and denote
\begin{eqnarray*}
E_m^n=\bigoplus_{x,y\in W^\p}\Ext\left(M(x\cdot\la_0),M(y\cdot\la_0)\right)=\bigoplus_{\la,\mu\in\La_m^n}\Ext_{K_m^n}
\left(M(\la),M(\mu)\right).
\end{eqnarray*}
A very useful tool for describing $E_m^n$ are Shelton's recursive dimension formulas which he established in \cite{Shel1988} more generally for all the hermitian symmetric cases. For an arbitrary parabolic subalgebra $\p$, there is no explicit formula, not even a candidate.

Abbreviating $E^k(x,y)=\dim \Ext^k(M(\la_0.x), M(\la_0.y))$ for $x,y \in W^{\fr{p}}$, \cite[Theorem 1.3]{Shel1988} can be formulated as follows:
\begin{Theorem}[Dimension of $\Ext$-spaces]\label{Th:dim}
With $\fr{g}$ and $\p$ as above, let $x,y \in W^{\fr{p}}$ and let $s$  be a simple reflection with $x>xs$ and $xs \in
W^{\fr{p}}$. The dimensions $E^k(x,y)$ are then given by the following formulas:
\begin{align*}
1. \ &E^k(x,y)=&&0  &&\forall \ k \text{ unless } y\leq x; \\ 2. \ &E^k(x,x)=&&\begin{cases} 1 &\text{  for } k=0\\ 																			 
0 &\text{ otherwise.} \end{cases} \\ \intertext{For $y<x$ there are the following recursion formulas:} 3.\ & E^k(x,y)
=&&E^k(xs,ys) &&\text{ if }y>ys \text{ and }ys \in W^{\fr{p}}; \\ 4.\ & E^k(x,y) =&&E^{k-1}(xs,y) &&\text{ if }ys \notin
W^{\fr{p}}; \\ 5.\ & E^k(x,y) =&&E^{k-1}(xs,y) +E^k(xs,y) &&\text{ if }ys>y\text{ but }xs \not> ys; \\ 6.\ &E^k(x,y)
=&&E^{k-1}(xs,y) - E^{k+1}(xs,y)\notag \\ 	&&&+E^k(xs,ys) 	&&\text{ if }x>xs>ys>y.	 	 \end{align*}
\end{Theorem}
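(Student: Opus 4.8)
This is Shelton's theorem \cite{Shel1988}; the plan is to reprove the recursion by means of the wall-crossing (translation) functors $\theta_s$ on the principal block $\O^\p_0$, which is the natural categorical route and is compatible with the diagrammatic picture through Theorem \ref{Cor:equcat}. Write $M_x:=M(\la_0.x)$ for $x\in W^\p$. The two structural inputs I would use are, first, that $\theta_s$ is exact and biadjoint to itself, so $\Ext^k(\theta_s N,N')\cong\Ext^k(N,\theta_s N')$ for all $k$; and second, the classical behaviour of $\theta_s$ on parabolic Verma modules: for $xs<x$ with $xs\in W^\p$ there is a short exact sequence $0\to M_{xs}\to\theta_s M_{xs}\to M_x\to0$; one has $\theta_s M_y=0$ exactly when $ys\notin W^\p$ (the weight becomes singular for the Levi); and $\theta_s M_y\cong\theta_s M_{ys}$ when $ys<y$ with $ys\in W^\p$, while $0\to M_{ys}\to\theta_s M_y\to M_y\to0$ when $ys>y$ with $ys\in W^\p$. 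Parts~1 and~2 require no recursion: part~1 is exactly the exceptional-sequence vanishing recalled in the introduction, and part~2 says each $M_x$ is rigid, $\Ext^k(M_x,M_x)=\delta_{k,0}\C$; both hold because the $M_x$ are the standard modules of the quasi-hereditary algebra $K^n_m$, and they form the base of an induction on $l(x)$.

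For the recursion I would fix $s$ with $x>xs$, $xs\in W^\p$ and apply $\hom(-,M_y)$ to $0\to M_{xs}\to\theta_s M_{xs}\to M_x\to0$. Rewriting $\Ext^k(\theta_s M_{xs},M_y)\cong\Ext^k(M_{xs},\theta_s M_y)$ by biadjunction, the associated long exact sequence reads
\[
\cdots\to\Ext^k(M_x,M_y)\to\Ext^k(M_{xs},\theta_s M_y)\to\Ext^k(M_{xs},M_y)\to\Ext^{k+1}(M_x,M_y)\to\cdots
\]
This is the engine of the proof: it computes $E^k(x,y)$ from the strictly shorter data $E^\bullet(xs,\cdot)$ together with the middle term $\Ext^\bullet(M_{xs},\theta_s M_y)$, whose size is controlled entirely by the shape of $\theta_s M_y$. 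The four shapes listed above produce the four recursion formulas.

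The two degenerate shapes are immediate. If $ys\notin W^\p$ then $\theta_s M_y=0$, the middle term dies, and the engine collapses to $E^k(xs,y)\cong E^{k+1}(x,y)$, which is formula~4. If $ys<y$ with $ys\in W^\p$, so $s$ is a common right descent, then substituting $\theta_s M_y\cong\theta_s M_{ys}$, applying biadjunction a second time, and discarding the summands that vanish by part~1 identifies $\Ext^k(M_x,M_y)$ with $\Ext^k(M_{xs},M_{ys})$, which is formula~3. In the remaining ascent case $ys>y$, $ys\in W^\p$, the sequence $0\to M_{ys}\to\theta_s M_y\to M_y\to0$ expresses the middle term through $E^\bullet(xs,ys)$ and $E^\bullet(xs,y)$. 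If moreover $xs\not>ys$ then $ys\not\le xs$, so part~1 gives $E^\bullet(xs,ys)=0$, the middle term reduces to $E^k(xs,y)$, and once one checks that the resulting comparison map vanishes the engine splits into $E^k(x,y)=E^{k-1}(xs,y)+E^k(xs,y)$, which is formula~5.

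The hard part will be formula~6, the case $x>xs>ys>y$. Here $E^\bullet(xs,ys)$ no longer vanishes, so the sequence for the middle term and the engine must be spliced into one diagram, and the outcome is the alternating expression $E^{k-1}(xs,y)-E^{k+1}(xs,y)+E^k(xs,ys)$. The minus sign is the crux: it forces some connecting homomorphism to be injective (equivalently, a comparison map to be an isomorphism in the relevant range), so that $E^{k+1}(xs,y)$ is subtracted rather than added. Proving this demands the precise internal structure of $\theta_s M_y$ and its compatibility with the filtration of $\theta_s M_{xs}$, and I would organize it as a simultaneous induction on $l(x)$ and $l(x)-l(y)$, feeding in formulas~3--5 and the base cases for the lower-length terms. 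The surrounding bookkeeping --- exactness of the sequences, identification of the distinguished degree-one maps $f_{\la,\mu}$, and the signs --- is routine given Theorem \ref{Cor:equcat}, but the cancellation behind formula~6 is where the genuine content lies.
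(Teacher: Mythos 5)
The first thing to say is that the paper does not prove this statement at all: Theorem \ref{Th:dim} is imported verbatim from Shelton \cite[Theorem 1.3]{Shel1988}, and the only argument supplied in the text is the paragraph following the theorem, which translates Shelton's indexing $N_y=M(\la_0.\omega_{\fr{m}}y\omega_0)$ into the conventions used here. So your proposal is not an alternative route to the paper's proof --- there is no proof in the paper to compare with --- but rather a reconstruction of Shelton's own argument, whose engine is indeed the one you describe: the exact, self-biadjoint wall-crossing functor $\theta_s$, the two-step standard filtration of $\theta_s M_{xs}$, and the long exact sequence obtained by applying $\hom(-,M_y)$.

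As a proof sketch, however, it has two concrete problems. First, your two short exact sequences contradict each other: for a pair $w<ws$ in $W^{\fr{p}}$ the modules $\theta_s M_w$ and $\theta_s M_{ws}$ are isomorphic, yet you take the standard submodule to be the \emph{shorter} element in $0\to M_{xs}\to\theta_s M_{xs}\to M_x\to 0$ (where $xs<x$) and the \emph{longer} element in $0\to M_{ys}\to\theta_s M_y\to M_y\to 0$ (where $ys>y$). Only one convention can hold, and formula~4 forces the first: with the opposite orientation the collapsed long exact sequence would give $E^k(x,y)=E^{k+1}(xs,y)$ rather than $E^{k-1}(xs,y)$. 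The error is harmless in case~5 (one of the two outer terms vanishes there) but changes which connecting maps appear in case~6. Second, and more seriously, the steps you defer are precisely the substance of Shelton's proof. In case~5 the long exact sequence by itself only yields the inequality $E^k(x,y)\leq E^{k-1}(xs,y)+E^k(xs,y)$; to get equality one must prove that the restriction map $\Ext^k(\theta_s M_{xs},M_y)\to\Ext^k(M_{xs},M_y)$ is zero. In case~6 the minus sign requires an injectivity statement for a connecting homomorphism that does not follow from the formal setup, and your sketch offers no mechanism for producing it beyond "a simultaneous induction". Without these two lemmas the argument establishes nothing beyond inequalities, so the proposal as written is a plausible plan rather than a proof; for the purposes of this paper the honest course is the one actually taken, namely to cite Shelton and verify the dictionary of conventions.
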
			
%\subsection{Some $\Ext$-vanishing}
To translate between our setup and Shelton's note that he denotes $N_y=M(\la_0.\omega_{\fr{m}}y \omega_0)$ where $\omega_0$ and $\omega_{\fr{m}}$ are the longest elements in $W$ and in $W_{\fr{p}}$ respectively. Then it only remains to observe that
for $y, x \in W$ we have $\omega_{\fr{m}}y \omega_0 \in W^{\fr{p}} \Leftrightarrow y \in W^{\fr{p}}$ and  $\omega_{\fr{m}}y \omega_0<\omega_{\fr{m}}x \omega_0 \Leftrightarrow y > x$ in the Bruhat order.

Although the previous theorem determines all dimension of Ext-spaces, it is convenient to have explicit vanishing
conditions. Therefore, we reprove the Delorme-Schmid Theorem (cf. \cite{Delo77}, \cite{schm81}) in our situation:
\begin{Lemma} \label{Le:Extrest}
For $\la, \mu \in \La_m^n$ we have
\begin{align*}
\Ext^k(M(\la),M(\mu))=0 \ \quad\forall \ k >l(\la) - l(\mu).
\end{align*}
\end{Lemma}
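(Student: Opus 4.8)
The statement is equivalent, under the bijection $\La_m^n\leftrightarrow W^\p$ and the identity $l(\la_0.x)=l(x)$, to showing $E^k(x,y)=0$ for all $k>l(x)-l(y)$. My plan is to prove this purely combinatorially from Shelton's recursion, Theorem~\ref{Th:dim}, by induction on $l(x)$. Note first that the linear-resolution bounds of Propositions~\ref{Le:termprojres} and~\ref{Le:mapprojres} run in the opposite direction: $\hom^k(P_\bullet(\la),P_\bullet(\mu))$ vanishes for \emph{small} $k$ (namely $k<l(\la)-l(\mu)-n^2$), so they give no grip on the \emph{large} $k$ vanishing wanted here. The correct input is the recursion package of Theorem~\ref{Th:dim}.

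Before starting the induction I would record the elementary fact that every $x\in W^\p$ with $x\neq e$ admits a simple reflection $s$ with $xs<x$ and $xs\in W^\p$: choosing any right descent $s$ of $x$ (so $l(xs)=l(x)-1$), one checks for every $\al\in J$ that $l(s_\al x s)\geq l(s_\al x)-1\geq l(x)=l(xs)+1>l(xs)$, using $l(s_\al x)>l(x)$ precisely because $x\in W^\p$; hence $l(s_\al(xs))>l(xs)$ for all $\al\in J$, i.e.\ $xs\in W^\p$. This guarantees the hypotheses of Theorem~\ref{Th:dim} can always be met, with $l(xs)=l(x)-1$, which is what powers the induction.

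For the induction itself, the base case $l(x)=0$ forces $x=e$ and hence $y=x$, so the claim is formula~2. For $l(x)>0$ fix such an $s$. If $y\not\leq x$ then $E^k(x,y)=0$ for all $k$ by formula~1; if $y=x$ the bound $k>0$ is formula~2. For $y<x$ I would feed the inductive hypothesis for $xs$ into whichever of formulas~3--6 applies and check vanishing in the range $k>l(x)-l(y)$. Formula~3 returns $E^k(xs,ys)$ with $l(xs)-l(ys)=l(x)-l(y)$, matching the threshold exactly; formula~4 drops $k$ by one against a length drop of one, again matching exactly; formulas~5 and~6 produce extra terms $E^k(xs,y)$, $E^{k+1}(xs,y)$, $E^k(xs,ys)$ whose vanishing thresholds $l(x)-l(y)-1$ and $l(x)-l(y)-2$ are \emph{smaller} than $l(x)-l(y)$, so they vanish a fortiori once $k>l(x)-l(y)$. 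Summing the signed contributions then gives $E^k(x,y)=0$.

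The only genuine work is bookkeeping: verifying that cases~3--6 are exhaustive (split on whether $ys\in W^\p$, then on $y<ys$ or $y>ys$, then on $xs>ys$ or $xs\not>ys$) and that in each recursion the homological shift never outpaces the length drop. The case to watch is formula~6, where $E^k(x,y)$ is the alternating combination $E^{k-1}(xs,y)-E^{k+1}(xs,y)+E^k(xs,ys)$; here I must confirm that all three summands individually vanish for $k>l(x)-l(y)$, not merely their signed sum. This holds because $l(xs)-l(y)=l(x)-l(y)-1$ and $l(xs)-l(ys)=l(x)-l(y)-2$, so no cancellation is invoked and the argument is robust.
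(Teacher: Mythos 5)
Your proof is correct, but it takes a genuinely different route from the paper. The paper argues structurally from the linear projective resolutions of Theorem \ref{Th:constrproj}: for $k>l(\la)-l(\mu)$ every summand $P(\nu)$ of $P_k(\la)$ satisfies $l(\nu)\le l(\la)-k<l(\mu)$ by the \emph{upper} bound in Proposition \ref{Le:termprojres}, so by Proposition \ref{Le:condmaps} the composite $P(\nu)\to P(\mu)\to M(\mu)$ vanishes; hence any chain map $P_\bullet(\la)\to P_\bullet(\mu)[k]$ is a lift of the zero map on the truncated resolution and is therefore null-homotopic. In particular your side remark that Propositions \ref{Le:termprojres} and \ref{Le:mapprojres} give ``no grip on the large $k$ vanishing'' is only half right: Lemma \ref{Le:mapprojres} is indeed a small-$k$ statement, but the inequality $l(\nu)\le l(\la)-i$ in Proposition \ref{Le:termprojres} is precisely the large-$k$ input the paper uses. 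Your alternative via Shelton's recursion is sound: the descent argument showing that some right descent $s$ of $x$ keeps $xs\in W^\p$ is correct, the case analysis in Theorem \ref{Th:dim} is exhaustive, and in each of formulas 3--6 every term on the right-hand side has homological degree strictly exceeding the relevant length difference for the shorter element, so the induction on $l(x)$ closes without invoking any cancellation; the paper itself observes in the remark following the lemma that the result ``could also be deduced from Shelton's formulas''. What the paper's proof buys is independence from Shelton's theorem as a black box together with the stronger chain-level conclusion (the specific chain maps between the fixed resolutions are null-homotopic), which is the form needed in the later $A_\infty$ computations; what yours buys is a purely combinatorial argument that needs nothing about the explicit construction of the resolutions.
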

\begin{proof}
We claim that any chain map $f:P_\bullet(\la) \to P_\bullet(\mu)[k]$ with $k >l(\la) - l(\mu)$ is
homotopic to zero. On the $k$th component $f$ induces a map $f_k:P_k(\la) \to P_0(\mu)=P(\mu)$. For $P(\nu)$ occurring as a
direct summand in $P_k(\la)$ we have $l(\nu) \leq l(\la) -k <l(\la)-(l(\la)-l(\mu))=l(\mu)$ by Lemma \ref{Le:termprojres}. By
Proposition \ref{Le:condmaps} $L(\nu)$ does not occur in $M(\mu)$ and so the composition $P(\nu) \to P(\mu) \to M(\mu)$
is zero. Let $P^T_\bullet(\la)$ be the truncated complex with $P^T_i(\la) = 0$ for $i<0$ and $P^T_i(\la)=P_{i+k}(\la)$ if $i \geq 0$. This is a projective resolution of $\operatorname{im} d_k$, and $f_\bullet$ induces a morphism
$\widetilde{f}_\bullet: P^T_\bullet(\la) \to P_\bullet(\mu)$ such that
\begin{displaymath}
\xymatrix{ 0 \ar[r] &\cdots \ar[r] \ar[d] &P^T_0(\la)\ar[d]^{\widetilde{f}_0} \ar[r] & \operatorname{im} d_k \ar[r] \ar[d]^{0}
\ar[r] &0\\
 0 \ar[r] &\cdots \ar[r]  &P(\mu) \ar[r] & M(\mu) \ar[r]&0}
\end{displaymath}
where $\widetilde{f}$ is a lift of the zero map. Since the zero map between the complexes is also a lift of the zero map and two
lifts are equal up to homotopy (\cite[Theorem III.1.3]{Gelf88}) the map $\widetilde{f}$ is nullhomotopic by a homotopy
$H:P_\bullet^T(\la) \to P_\bullet(\mu)[-1]$. This extends to a homotopy $H:P_\bullet(\la) \to P_\bullet(\mu)[-1]$ by defining
it to be zero on the other terms. The claim follows.
\end{proof}
\begin{Remark}{\rm
The result of Lemma \ref{Le:Extrest} could also be deduced from Shelton's formulas or from the explicit formulas \cite[Theorem 3.4]{Biag2004}.}
\end{Remark}
\section{Special cases}\label{ch:spec}
Now we want to describe the $\Ext$-algebra in the cases $(m,n)=(1,N)$ and $(m,n)=(2,N-1)$. The first algebra is related to algebras appearing in (knot) Floer homology, see \cite{khovanov2002quivers}, \cite{Grig2010}, the second invokes our theory in a more substantial way and provides interesting $A_{\infty}$-structures.

Using knowledge about decomposition numbers, the endomorphism spaces of projective modules and the projective resolutions together with the tools worked out above, one can choose explicit maps between the projective resolutions from Theorem \ref{Th:constrproj} and determine their linear dependence up to null homotopies. In this way we will construct non-trivial elements in $\Ext^i$ which, using Shelton's dimension formulas, can be shown form a basis. Finally we compute the multiplication rules. Especially in the case for $n=2$ the computations are long and cumbersome and carried out in \cite{Klam10}. We present the crucial computations for the $n=1$ case here, which suffice in this case to get the results by a few easy straightforward calculations. For the $n=2$ case we present the results and main idea and refer to
\cite{Klam10} for the details.
\subsection{The case $n=1$}
The elements in $W^\p$ are precisely $s_1 \cdots s_j$, $0\leq j\leq N-1$ and we abbreviate $(j)=\la_0.s_1s_2\dots s_j$.
The filtrations in Theorems \ref{qh1} and \ref{qh2} combined determine the filtration of projective modules in terms of simple modules presented in Table \ref{prosimp1}.
\begin{table}
\footnotesize \caption{\label{prosimp1} Filtration of projective module $P(\la)$ by simple modules, same colour belonging to
the same Verma module} 	\centering		\begin{tabular}{|l|c|} 		\toprule 		$\la=(j)$ & $P(j)$\\ 		\hline 		
$\begin{array}{l} j \neq 0 \\ j \neq N\end{array}$& 		$\begin{array}{l} \hspace*{1cm}\blu L(j)\\ \blu L(j+1)   \re
L(j-1) \\ \hspace*{1cm} \re L(j)
\end{array}$\\ \hline
		$j=0$& 		$\begin{array}{l} \blu L(0)\\ \blu L(1)
\end{array}$\\ \hline
		$j=N$& 		$\begin{array}{l} \blu L(N)\\
 \re L(N-1) \\
\re L(N)
\end{array}$\\ \hline
		 						\end{tabular}
\end{table}
%\subsubsection*{Terms occurring in the resolutions}
To compute the combinatorial Kazhdan-Lusztig polynomials which determine the terms of the resolution of the cell module $M(\la)$ we consider $(s)=\mu \geq \la=(j)$ and obtain
$$\begin{tikzpicture}[scale=1]
		\draw (-0.6,0) node[above=-1.55pt] {$\mu$}; 		\draw (0,0) node[above=-1.55pt] {$\cdots$}; 		\draw (0.4,0)
node[above=-1.55pt] {$\down$}; 		\draw (0.8, 0) node[above=-1.55pt] {$\up$}; 		\draw (1.2,0) node[above=-1.55pt]
{$\cdots$}; 		\draw (0.8,0.3) arc (0:180:0.2); 		\begin{scope}[yshift=-0.4cm] 		\draw (-0.6,0)
node[above=-1.55pt] {$\ell_i$}; 		\draw (0.4,0) node[above=-1.55pt] {$0$}; 		\end{scope} 		
\begin{scope}[yshift=0.15cm] 		\scriptsize			\draw (0.6,0) node[above=-1.55pt] {$0$}; 		\end{scope}			
\begin{scope}[yshift=-0.8 cm] 		\draw (-0.6,0) node[above=-1.55pt] {$\la$}; 		\draw (0,0) node[above=-1.55pt]
{$\cdots$}; 		\draw (0.4,0) node[above=-1.55pt] {$\down$}; 		\draw (0.8,0) node[above=-1.55pt] {$\cdots$}; 		
\draw (1.2, 0) node[above=-1.55pt] {$\up$}; 		\draw (1.8,0) node[above=-1.55pt] {$\cdots$}; 		\end{scope} 		
\end{tikzpicture}$$ 		
and therefore $p_{\la,\mu}=q^{j-s}$. By Theorem \ref{Th:constrproj} there is then a unique summand occurring
in the $i$th position of the resolution of $M(\la)$, namely the projective module $P(j-i)$, and we have the distinguished morphism $f_k:=f_{k,k+1}$, homogeneous of degree $1$, from $P(k)$ to $P(k+1)$. Set $\d_{n-k}(n)=(-1)^{n+k+1} f_k$.
\begin{Lemma}\label{Th:diffproj1}
The chain complex
$$0 \to P(0)\langle n\rangle \stackrel{d_{0}}\to P(1)\langle n-1\rangle \to \cdots \stackrel{d_{n-1}}\to P(n)\to 0$$ is a (linear) projective resolution of $M(n)$ in $K_N^1-\gmod$.
\end{Lemma}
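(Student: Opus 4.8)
The plan is to take the shape of the resolution from Theorem~\ref{Th:constrproj} for free and spend the real work on pinning down the differentials, using two features special to $n=1$: the degree-one morphism spaces between neighbouring projectives are at most one-dimensional, and the relevant degree-two spaces vanish. First I would record that the terms are already forced. The computation $p_{(n),(s)}=q^{\,n-s}$ carried out just above shows $p^{(i)}_{(n),(s)}=\delta_{s,\,n-i}$, so Theorem~\ref{Th:constrproj} provides a linear projective resolution $Q_\bullet$ of $M(n)$ with $Q_i=P(n-i)\langle i\rangle$ for $0\le i\le n$ and $Q_i=0$ otherwise. These are exactly the objects appearing in the displayed complex, so only the differentials remain to be identified.

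Next I would argue that each differential is forced to be a nonzero scalar multiple of the distinguished map $f_k$. Unravelling the internal shifts, a degree-zero map of graded modules $Q_i\to Q_{i-1}$ is the same datum as a degree-one map $P(n-i)\to P(n-i+1)$; since $\dim\hom_{K_N^1}(P(k),P(k+1))_1\le 1$ with the space spanned by $f_{n-i}$, the differential of $Q_\bullet$ in homological degree $i$ is $c_i f_{n-i}$ for some scalar $c_i$. Each $c_i$ is nonzero because $Q_\bullet$ is exact: a vanishing interior differential cannot occur, as the image of a degree-raising map lies in degrees $\geq i+1$ and hence never exhausts $Q_i$. The stated differentials $d_\bullet=\pm f_k$ are likewise nonzero multiples of the same maps. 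As a by-product one even gets $d\circ d=0$ combinatorially: a composite of two consecutive $f_k$'s lands in $\hom_{K_N^1}(P(k),P(k+2))_2$, and this space vanishes, since $\und{(k)}$ carries at most one cup and $\ove{(k+2)}$ at most one cap, so a degree-two orientation would require a weight of $\La_N^1$ with two $\up$-labels, whereas it has only one.

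Finally I would compare the two complexes. Both $Q_\bullet$ and the displayed complex have identical objects and differentials that are nonzero scalar multiples of the same $f_{n-i}$; writing the stated ones as $\eps_i f_{n-i}$, I choose scalars recursively by $\lambda_0=1$ and $\lambda_i=\lambda_{i-1}\eps_i/c_i$ and set $\phi_i=\lambda_i\cdot\operatorname{id}$. Then $\phi_\bullet$ is an isomorphism from the displayed complex to $Q_\bullet$ intertwining the two differentials; transporting $(\partial^{Q})^2=0$ across $\phi$ reconfirms it is a complex, and transporting exactness shows it is a resolution, with augmentation the canonical quotient $P(n)\twoheadrightarrow M(n)$ since $\phi_0=\operatorname{id}$ on $Q_0=P(n)$. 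The only genuinely case-specific input is the pair of local facts isolated above---the one-dimensionality of $\hom(P(k),P(k+1))_1$ and the vanishing of $\hom(P(k),P(k+2))_2$---both of which reduce to the single-$\up$ combinatorics of $\La_N^1$; everything else is formal homological algebra. The prescribed signs $(-1)^{n+k+1}$ are irrelevant to the resolution property itself and are fixed only to streamline the later Yoneda-product and $A_\infty$-computations.
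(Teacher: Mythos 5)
Your proposal is correct and follows essentially the same route the paper takes (the lemma is stated there without a separate proof, the argument being exactly the preceding discussion: the terms are forced by Theorem~\ref{Th:constrproj} via $p_{(n),(s)}=q^{\,n-s}$, and the differentials are forced up to nonzero scalar by $\dim\hom_{K_N^1}(P(k),P(k+1))_1\leq 1$). Your extra verifications --- that the scalars are nonzero by exactness and that $d\circ d=0$ because $\hom_{K_N^1}(P(k),P(k+2))_2=0$ by the single-$\up$ combinatorics --- are details the paper leaves implicit, and both are correct.
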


\begin{Prop}\label{Prop:IdF}
For $j \geq l$ the identity maps $\operatorname{id}: P(s) \to P(s)$ for all $s\leq l$ define a chain map
\begin{eqnarray*}
\Id^{(j)}_{(l)}:&& P_\bullet(j) \to P_\bullet(l)[j-l]\langle j-l\rangle\\
\end{eqnarray*}
which induces a non-trivial element in $\Ext^{j-l}(M(j),M(l))$. For $j>l$, the maps $f_{s,s-1}:P(s) \to P(s-1)$ for all $s\leq l+1$ define a chain map
\begin{eqnarray*}
F^{(j)}_{(l)}:&& P_\bullet(j) \to P_\bullet(l)[j-l-1]\langle j-l-2\rangle
\end{eqnarray*}
which induces a non-trivial element in $\Ext^{j-l-1}(M(j),M(l))$.
\end{Prop}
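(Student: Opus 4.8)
The plan is to realize both $\Id^{(j)}_{(l)}$ and $F^{(j)}_{(l)}$ as genuine chain maps between the explicit linear resolutions of Lemma~\ref{Th:diffproj1}, and then to deduce non-triviality from the minimality of those resolutions. I would first unwind the shifts. Writing $P_i(j)=P(j-i)\langle i\rangle$, a direct bookkeeping with Convention~\ref{Not:sign} shows
$(P_\bullet(l)[j-l]\langle j-l\rangle)_i=P(j-i)\langle i\rangle$ and $(P_\bullet(l)[j-l-1]\langle j-l-2\rangle)_i=P(j-i-1)\langle i-1\rangle$, each nonzero exactly in the homological range where the prescribed components live. Thus the target of $\Id^{(j)}_{(l)}$ agrees termwise with the truncation of $P_\bullet(j)$, and the identity maps $P(s)\to P(s)$ (where $s=j-i\le l$) are homogeneous of internal degree $0$; likewise the maps $f_{j-i,j-i-1}\colon P(j-i)\to P(j-i-1)$ become degree-$0$ maps into the shifted target, which is precisely what pins down the internal shifts (the extra $-2$ absorbing the degree $1$ of $f_{s,s-1}$).

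Next I would verify the chain-map squares. Using the sign convention $\d_{n-k}(n)=(-1)^{n+k+1}f_k$ fixed just before Lemma~\ref{Th:diffproj1} together with Convention~\ref{Not:sign}, the differentials of $P_\bullet(j)$ and of the shifted target of $\Id^{(j)}_{(l)}$ coincide as signed maps wherever both are defined, so the identity is a chain map with essentially no work, the single boundary square (where the target differential is $0$) commuting trivially. For $F^{(j)}_{(l)}$ the same computation collapses each interior square to the one algebra relation
\[
f_{s+1,s}\circ f_{s,s+1}=f_{s-1,s}\circ f_{s,s-1}\qquad(s=j-i,\ 1\le s\le l)
\]
between degree-$2$ endomorphisms of $P(s)$, the top square forcing the degenerate case $f_{1,0}\circ f_{0,1}=0$ on $P(0)$ and the bottom square being automatic since the target already vanishes there. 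This zig-zag relation is the step I expect to be the main obstacle. Its boundary case is immediate from Table~\ref{prosimp1}, as $P(0)$ is concentrated in internal degrees $0,1$ and so has no nonzero degree-$2$ endomorphism; but the interior relation must be checked with matching scalars — both composites are the (up to scalar unique) endomorphism $P(s)\twoheadrightarrow L(s)\hookrightarrow P(s)$ running through head and socle of $P(s)$ — which I would extract directly from the surgery multiplication of $K_m^n$ (\cite[Section~3, Theorem~6.1]{Brun12008}), keeping careful track of the signs introduced above. Making the two scalars agree is the delicate point.

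Finally I would prove non-triviality using minimality. Since the resolutions are linear, every differential is homogeneous of degree $1$ and lands in $\operatorname{rad}(P_{i-1})$; both $\Id^{(j)}_{(l)}$ and $F^{(j)}_{(l)}$ are nonzero chain maps of internal degree $0$, so any null-homotopy $H$ would itself be of internal degree $0$. Tracing the shifts, the components of $H$ would be degree-$(-1)$ maps $P(j-i)\to P(j-i-1)$ in the identity case, and degree-$0$ maps $P(j-i)\to P(j-i-2)$ between non-isomorphic projectives in the $F$ case. As $K_m^n$ is non-negatively graded and $\hom_{K_m^n}(P(a),P(b))_0=0$ for $a\ne b$, all of these maps vanish, so $H=0$. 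Under the identification $\Ext^k(M(j),M(l))=\hom_{\K}(P_\bullet(j),P_\bullet(l)[k])$ of Section~\ref{ch:NotHom}, the two chain maps therefore represent nonzero classes in $\Ext^{j-l}(M(j),M(l))$ and $\Ext^{j-l-1}(M(j),M(l))$ respectively — the top and next-to-top nonvanishing degrees permitted by Lemma~\ref{Le:Extrest}.
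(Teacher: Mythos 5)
Your argument is correct, and it diverges from the paper's in an instructive way. The paper's proof consists of a single step: it takes the chain-map property for granted and only checks non-nullhomotopy, declaring it ``clear'' for $\Id^{(j)}_{(l)}$ and, for $F^{(j)}_{(l)}$, invoking the global vanishing bound of Lemma~\ref{Le:mapprojres} to see that the whole space $\hom^{j-l-2}(P_\bullet(j),P_\bullet(l)\langle j-l-2\rangle)$ where a homotopy would live is zero (since $j\nleq l+1+(j-l-2)$). You instead argue by hand from linearity and positivity of the grading: the homotopy components are forced into $\hom_{-1}(P(j-i),P(j-i-1))$ resp.\ $\hom_0(P(j-i),P(j-i-2))$, which vanish. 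Both arguments are valid; the paper's is shorter and reuses machinery already in place, while yours is more elementary and makes visible exactly why minimality of the resolutions kills the homotopies. You also do genuine work the paper suppresses entirely, namely verifying that the prescribed components actually assemble into chain maps; your sign bookkeeping with $\d_{j-k}(j)=(-1)^{j+k+1}f_k$ and Convention~\ref{Not:sign} is right (the signs cancel in each square), and you correctly isolate the one nontrivial input, the relation $f_{s+1,s}\circ f_{s,s+1}=f_{s-1,s}\circ f_{s,s-1}$ with the boundary case $f_{1,0}\circ f_{0,1}=0$ on $P(0)$. That relation is left as a computation in your write-up, but it is the standard zigzag identity in $K_N^1$ (both composites equal the unique degree-$2$ basis vector $\und{(s)}\nu\ove{(s)}$ of $e_{(s)}K_N^1e_{(s)}$ under the surgery product, and $e_{(0)}K_N^1e_{(0)}$ has no degree-$2$ part, as your appeal to Table~\ref{prosimp1} shows), so this is not a gap, merely an outsourced verification. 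No errors to report.
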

\begin{proof}
We have to check that the maps are not nullhomotopic which is clear in the clear in the first case. For $F^{(j)}_{(l)}$, a homotopy would be a map $H \in \hom^{j-l-2}(P_\bullet(j),P_\bullet(l)\langle j-l-2
\rangle)$ which cannot exist by Lemma \ref{Le:mapprojres} since $j \nleq l+1^2+(j-l-2)$.
\end{proof}
The dimension formula from Theorem \ref{Th:dim} implies that we constructed a basis of $E_N^1$. By explicitly composing chain maps we obtain the following relations in $\hom(P_\bullet,P_\bullet)$:
\begin{eqnarray*}
\Id^{(j)}_{(l)} \cdot \Id^{(l)}_{(m)}= \Id^{(j)}_{(m)},\; F^{(j)}_{(l)} \cdot F^{(l)}_{(m)}=0,\; \Id^{(j)}_{(l)} \cdot F^{(l)}_{(m)}=F^{(j)}_{(m)},\; F^{(j)}_{(l)} \cdot \Id^{(l)}_{(m)}= F^{(j)}_{(m)}
\end{eqnarray*}
Reformulating the above result in terms of quivers, we obtain:
\begin{Theorem}\label{mainalg1}
The algebra $E_N^1$ is isomorphic to the path algebra of the quiver
\begin{displaymath}\xymatrix{
(N) \ar@[cyan]@/^/[r] \ar@[black]@/_/[r]&\cdots  \ar@[cyan]@/^/[r] \ar@[black]@/_/[r]& (j+1) \ar@[cyan]@/^/[r]
\ar@[black]@/_/[r]& (j)
 \ar@[cyan]@/^/[r] \ar@[black]@/_/[r]&(j-1)     \ar@/_/[r]  \ar@[cyan]@/^/[r]& \cdots \ar@[cyan]@/^/[r] \ar@[black]@/_/[r]&
 (0) }
\end{displaymath}
\normalsize \color{black} with relations
\begin{displaymath} \xymatrix{
\bullet \ar@[cyan]@/^/[r] &\color{black} \bullet \ar@[cyan]@/^/[r]&\color{black} \bullet \;=\;0,\quad \bullet
\ar@[cyan]@/^/[r] &\bullet \ar@[black]@/_/[r] &\color{black} \bullet \;=\;
 \bullet \ar@[black]@/_/[r] &\color{black}\bullet\ar@[cyan]@/^/[r] &\color{black} \bullet}.
     \end{displaymath}
     \color{black}
The vertex $\bullet$ labeled $i$ corresponds to the idempotent $e_\la$ where $\la=\la_0.s_1 \cdot \dots s_i$.
\end{Theorem}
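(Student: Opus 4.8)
The plan is to construct an explicit algebra homomorphism $\Phi$ from the path algebra $\A$ of the displayed quiver modulo the stated relations into $E_N^1$, and to show it is an isomorphism by checking well-definedness, surjectivity, and a matching dimension count. All of the genuine homological input is already available in the four multiplication formulas assembled just above the statement, so the proof is essentially a repackaging.

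First I would fix $\Phi$ on generators. The idempotent $e_i$ at the vertex labelled $i$ is sent to the class $\Id^{(i)}_{(i)}\in\Ext^0(M(i),M(i))$; by the discussion in Section \ref{ch:NotHom} these are mutually orthogonal idempotents of $E_N^1$. The cyan arrow $(j+1)\to(j)$ is sent to $F^{(j+1)}_{(j)}\in\Ext^0(M(j+1),M(j))$ and the black arrow $(j+1)\to(j)$ to $\Id^{(j+1)}_{(j)}\in\Ext^1(M(j+1),M(j))$, as produced in Proposition \ref{Prop:IdF}. The nilpotent family must be the cyan one, since the relation forces two cyan arrows to compose to zero, whereas the $\Id$-family satisfies $\Id\cdot\Id=\Id$.

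To see $\Phi$ is well defined I would check the two relations. The composite of two cyan arrows maps to $F^{(j+2)}_{(j+1)}\cdot F^{(j+1)}_{(j)}=0$, and the cyan--black composite maps to $F^{(j+2)}_{(j+1)}\cdot\Id^{(j+1)}_{(j)}=\Id^{(j+2)}_{(j+1)}\cdot F^{(j+1)}_{(j)}=F^{(j+2)}_{(j)}$, i.e. the black--cyan composite. These are precisely the multiplication identities listed above the theorem, so $\Phi$ respects the relations and is an algebra homomorphism. Surjectivity is then immediate: using $\Id^{(j)}_{(l)}=\Id^{(j)}_{(j-1)}\cdots\Id^{(l+1)}_{(l)}$ and $F^{(j)}_{(l)}=\Id^{(j)}_{(l+1)}\cdot F^{(l+1)}_{(l)}$ (the rules $\Id\cdot\Id=\Id$ and $\Id\cdot F=F$), the entire basis of $E_N^1$ from Proposition \ref{Prop:IdF} lies in the image.

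Finally I would pin down injectivity by a dimension count in $\A$. Any path from vertex $j$ to vertex $l$ is a word in the cyan and black generators running down adjacent vertices; the commutation relation lets one slide the cyan arrows freely past the black ones, and the relation $\text{cyan}\cdot\text{cyan}=0$ then annihilates any word containing two or more cyan arrows. Hence $e_j\A e_l$ is spanned by the all-black word together with the single word carrying exactly one cyan arrow (unique up to commutation), giving $\dim e_j\A e_l\le 2$ for $j>l$, $\le 1$ for $j=l$, and $0$ otherwise. Since $\Phi$ already surjects onto pieces of $E_N^1$ of exactly these dimensions---by Proposition \ref{Prop:IdF} together with the dimension formula of Theorem \ref{Th:dim}---all inequalities are equalities and $\Phi$ is bijective, hence an algebra isomorphism. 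The only real obstacle is this normal-form bookkeeping in $\A$; the subtle point, that sliding and cancellation leave no further hidden collapses, is confirmed \emph{a posteriori} by the dimension match.
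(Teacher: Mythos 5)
Your proposal is correct and follows essentially the same route as the paper: the basis of $E_N^1$ is the one from Proposition \ref{Prop:IdF} (complete by Theorem \ref{Th:dim}), the relations are the four multiplication identities of the chain maps $\Id^{(j)}_{(l)}$ and $F^{(j)}_{(l)}$, and the quiver presentation is the reformulation of these facts. The only thing you add is an explicit normal-form/dimension count in the path algebra modulo relations, which the paper leaves implicit in the phrase ``reformulating the above result in terms of quivers''; this is a worthwhile detail but not a different method.
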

\color{black}
\subsection{The result for $n=2$}
Now consider $(n,m)=(2,N-1)$. The elements in $W^\p$ are precisely the elements $s_2 \cdot \dots s_k \cdot s_1 \cdot \dots
\cdot s_l$ with $0\leq l < k \leq N$. We denote the weight $\lambda=\la_{0}. s_2 \cdot \ldots \cdot s_k \cdot s_1 \cdot
\ldots \cdot s_l $ by $(k|l)$; the associated diagrammatical weight has $\up$'s at the $l$th and $k$th position
(starting to count with position zero).
\begin{Theorem}\label{mainalg2}
The algebra $E_N^2$ is isomorphic to the path algebra of the quiver which looks as
\begin{displaymath}\xymatrix{
&\cdots \ar@[cyan]@/^/[d] \ar@[black]@/_/[d]& {  \cdots} \ar@[cyan]@/^/[d] \ar@[black]@/_/[d]& \cdots \ar@[cyan]@/^/[d]
\ar@[black]@/_/[d] &
 \\
\cdots  \ar@[cyan]@/^/[r] \ar@[black]@/_/[r]& (k+1|l+1)\ar@[cyan]@/^/[d] \ar@[black]@/_/[d]    \ar@[cyan]@/^/[r]
\ar@[black]@/_/[r]& (k|l+1)\ar@[cyan]@/^/[d] \ar@[black]@/_/[d]
 \ar@[cyan]@/^/[r] \ar@[black]@/_/[r]&(k-1|l+1)   \ar@[cyan]@/^/[d] \ar@[black]@/_/[d]  \ar@/_/[r]  \ar@[cyan]@/^/[r]&
 \cdots  \\
\cdots  \ar@[cyan]@/^/[r] \ar@[black]@/_/[r]& (k+1|l)  \ar@[cyan]@/^/[d] \ar@[black]@/_/[d]     \ar@[cyan]@/^/[r]
\ar@[black]@/_/[r]& (k|l)  \ar@[cyan]@/^/[d] \ar@[black]@/_/[d]     \ar@[cyan]@/^/[r] \ar@[black]@/_/[r]&(k-1|l)
\ar@[cyan]@/^/[d] \ar@[black]@/_/[d]     \ar@[cyan]@/^/[r] \ar@[black]@/_/[r]&\cdots  \\ \cdots  \ar@[cyan]@/^/[r]
\ar@[black]@/_/[r]&(k+1|l-1)  \ar@[cyan]@/^/[d] \ar@[black]@/_/[d]     \ar@[cyan]@/^/[r] \ar@[black]@/_/[r]& (k|l-1)
\ar@[cyan]@/^/[d] \ar@[black]@/_/[d]     \ar@[cyan]@/^/[r] \ar@[black]@/_/[r]&(k-1|l-1)  \ar@[cyan]@/^/[d] \ar@[black]@/_/[d]
\ar@[cyan]@/^/[r] \ar@[black]@/_/[r]& \cdots \\ & \cdots  & \cdots  & \cdots   & }\end{displaymath} for $k>l+2$ and in the other cases:
\begin{displaymath}
\xymatrix{ \dots  \ar@[cyan]@/^/[r] \ar@[black]@/_/[r]& (l|l-1) \ar@[cyan]@/^/[d] \ar@[black]@/_/[d] \ar@[red]@/^3pc/ [rrdd]
\ar@[green]@<1ex>@/^3pc/ [rrdd]& &\\ \cdots   \ar@[cyan]@/^/[r] \ar@[black]@/_/[r]&(l|l-2)  \ar@[cyan]@/^/[r]
\ar@[black]@/_/[r] \ar@[cyan]@/^/[d] \ar@[black]@/_/[d]  & (l-1|l-2) \ar@[cyan]@/^/[d] \ar@[black]@/_/[d]  &\\ \cdots
\ar@[cyan]@/^/[r] \ar@[black]@/_/[r]&(l|l-3)   \ar@[cyan]@/^/[r] \ar@[black]@/_/[r] \ar@[cyan]@/^/[d] \ar@[black]@/_/[d]  &
(l-1|l-3)    \ar@[cyan]@/^/[r] \ar@[black]@/_/[r]\ar@[cyan]@/^/[d] \ar@[black]@/_/[d]  & (l-2|l-3)\ar@[cyan]@/^/[d]
\ar@[black]@/_/[d] \\ & \cdots & \dots & \cdots }\end{displaymath} \color{black} with relations as follows (in case that both
sides of the relation exist): 	\begin{multicols}{3}
    \raggedcolumns
\begin{enumerate}
\item \begin{displaymath}\xymatrix@=0.6cm{\ar @{} [drrr] |{= \ -\ } \bullet \ar@/_/[r] & \bullet \ar@/_/[d]  & \bullet
    \ar@/_/[d] &  \\ &\bullet &\bullet \ar@/_/[r]& \bullet}\end{displaymath}
     \item
    \begin{displaymath}\xymatrix@=0.6cm{\ar @{} [drrr] |{\color{black} =} \color{black} \bullet \ar@[black]@/_/[r]
    &\color{black} \bullet \ar@[cyan]@/^/[d]  & \color{black} \bullet
    \ar@[cyan]@/^/[d] &  \\ &\color{black} \bullet &\color{black} \bullet \ar@[black]@/_/[r]& \bullet}\end{displaymath}
    \item
    \begin{displaymath}\xymatrix@=0.6cm{\ar @{} [drrr] |{\color{black}= } \color{black} \bullet \ar@[cyan]@/^/[r]
    &\color{black} \bullet \ar@[black]@/_/[d]  & \color{black} \bullet
    \ar@[black]@/_/[d] &  \\ &\color{black} \bullet & \color{black}\bullet \ar@[cyan]@/^/[r]& \color{black}
    \bullet}\end{displaymath} \item
    \begin{displaymath}\xymatrix@=0.6cm{\ar @{} [drrr] |{\color{black} = } \color{black} \bullet \ar@[cyan]@/^/[r]
    &\color{black} \bullet \ar@[cyan]@/^/[d]  &
    \color{black} \bullet \ar@[cyan]@/^/[d] &  \\ &\color{black} \bullet &\color{black} \bullet \ar@[cyan]@/^/[r]&
    \color{black} \bullet}\end{displaymath} \item
    \begin{displaymath}\xymatrix@=0.6cm{\ar @{} [ddr] |{\color{black} = } \color{black} \bullet \ar@[cyan]@/^/[d]
    &\color{black}  \bullet \ar@[black]@/_/[d]   \\\color{black}  \bullet
    \ar@[black]@/_/[d] & \color{black} \bullet \ar@[cyan]@/^/[d]\\ \color{black} \bullet &\color{black}
    \bullet}\end{displaymath} \item
    \begin{displaymath}\xymatrix@=0.6cm{\ar @{} [drr] |{\color{black} = } \color{black}\bullet \ar@[cyan]@/^/[r] &
    \color{black} \bullet \ar@[black]@/_/[r] &\color{black} \bullet   \\
    \color{black} \bullet \ar@[black]@/_/[r] &\color{black}  \bullet \ar@[cyan]@/^/[r] &\color{black}
    \bullet}\end{displaymath} \item
    \begin{displaymath}\xymatrix@=0.6cm{\ar @{} [ddr] |{\color{black} \ = \ }\color{black}  \bullet \ar@[cyan]@/^/[d] &
    \\ \color{black} \bullet
    \ar@[cyan]@/^/[d] &\color{black} 0 \\\color{black}  \bullet &}\end{displaymath}
     \item \begin{displaymath}\xymatrix@=0.6cm{\ar @{}[drr] |{\color{black} = }
  \color{black}  \bullet \ar@[cyan]@/^/[r] & \color{black} \bullet \ar@[cyan]@/^/[r] &\color{black} \bullet   \\
  &\color{black} 0  &}\end{displaymath} \color{black}
    \setcounter{enumglobal}{\value{enumi}}
\end{enumerate}
\end{multicols}
\color{black} These are all cases occurring in the middle of the quiver, i.e. in the upper diagram. We also have to look for
those at the corner part. Those can be found in \cite{Klam10}.
\end{Theorem}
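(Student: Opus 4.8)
The plan is to run, for two $\up$'s, the same four-step argument that proves Theorem \ref{mainalg1} in the case $n=1$: make the linear resolutions explicit, write down distinguished chain maps representing the generators, verify with Shelton's formulas (Theorem \ref{Th:dim}) that these form a basis, and finally compose chain maps to extract the relations.

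First I would determine the resolutions of Theorem \ref{Th:constrproj} explicitly. For $n=2$ a cap diagram $\ove{\mu}$ carries at most two caps, so the set $D(\la,\mu)$ of labelled cap diagrams is small and the Kazhdan--Lusztig polynomials $p_{(k|l),(k'|l')}$ of \eqref{KL} can be listed by hand; feeding these into Theorem \ref{Th:constrproj} fixes the module $P_i(k|l)=\bigoplus p^{(i)}_{(k|l),(k'|l')}P(k'|l')\langle i\rangle$ in each homological degree. The projective filtrations of Lemmas \ref{qh1} and \ref{qh2} describe these summands by their composition factors (the two-$\up$ analogue of Table \ref{prosimp1}), and the differentials are assembled from the distinguished degree-$1$ maps $f_{\la,\mu}$ analysed just before Theorem \ref{Th:constrproj}.

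Next I would produce the generators of $E_N^2$. As in the $n=1$ case (Proposition \ref{Prop:IdF}), lifting the identity maps and the distinguished maps $f_{\la,\mu}$ to chain maps between the resolutions produces the classes represented by the cyan and black arrows between neighbouring weights $(k|l)$. The genuinely new feature, absent when $n=1$, is that in the corner region there are additional generators (the red and green arrows from $(l|l-1)$ to $(l-2|l-3)$) which are \emph{not} Yoneda products of the neighbouring arrows and must be written down directly as chain maps; their non-triviality is verified through the length bound of Lemma \ref{Le:mapprojres}, just as in Proposition \ref{Prop:IdF}, and these corner classes are what force a non-vanishing $m_3$ later on. Shelton's recursion (Theorem \ref{Th:dim}), read through the dictionary $N_y=M(\la_0.\omega_{\fr{m}}y\omega_0)$ recorded after that statement, then confirms that the listed classes exhaust every $\Ext^k\big((k|l),(k'|l')\big)$ and hence form a basis.

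The last step, and the main obstacle, is to compute all Yoneda products by composing the chosen chain maps while respecting the sign convention \ref{Not:sign}, and to read off the defining relations. Unlike the $n=1$ situation, each $P_i(k|l)$ now carries several indecomposable summands, so every composition is a matrix built from the maps $f_{\la,\mu}$ and identities, and a relation only emerges after cancelling null-homotopies and carefully tracking signs. The commutativity relations (2)--(4) and the square-zero relations (7),(8) reflect the local surgery multiplication in $K_N^2$, while the sign relation (1) and the relations involving the red and green generators require the full computation. Accordingly I would present the middle-of-the-quiver relations (1)--(8) in detail and, exactly as the statement does, defer the longer list of corner relations to \cite{Klam10}, where the same method is carried out boundary case by boundary case.
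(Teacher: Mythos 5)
Your proposal follows the same route the paper outlines for this theorem (and carries out in detail only for $n=1$): compute the linear resolutions of Theorem \ref{Th:constrproj} from the combinatorial Kazhdan--Lusztig polynomials, lift the identity and the distinguished degree-one maps $f_{\la,\mu}$ to chain maps, rule out null-homotopies via Lemma \ref{Le:mapprojres}, match dimensions against Shelton's recursion (Theorem \ref{Th:dim}), and read off the relations by composing chain maps --- with the full $n=2$ bookkeeping deferred to \cite{Klam10}, exactly as the paper itself does. This is essentially the paper's own argument, correctly adapted, including the observation that the extra corner generators are the genuinely new ingredient.
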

\color{black}
\section{The $A_\infty$-structure on $E_m^n$}\label{ch:expAinf}
$A_\infty$-algebras are a generalization of associative algebras, see \cite{Kell2001} for an overview, including historical and topological motivation. A very detailed exposition with most of the proofs is provided in \cite{lefe2003}.

\begin{Def}
An \emph{$A_\infty$-algebra} over a field $k$ is a $\Z$-graded $k$-vector space
$A=\bigoplus_{p \in \Z} A^p$
endowed with a family of graded $k$-linear maps $$m_n: A^{\otimes n} \to A, \ n \geq 1$$
of degree $2-n$ satisfying the following Stasheff identities:
\begin{eqnarray*}
&\sum (-1)^{r+st} m_{r+t+1}(\Id^{\otimes r} \otimes m_s \otimes \Id^{\otimes t}) =0&
\end{eqnarray*}
where for fixed $n$ the sum runs over all decompositions $n=r+s+t$ with $s\geq 1$, and $r,t \geq 0$.
\end{Def}

We use the {\it Koszul sign convention}
$(f \otimes g)(x \otimes y)=(-1)^{|g||x|}f(x) \otimes g(y),$
for tensor products, where $x$, $y$, $f$, $g$ are homogeneous elements of degree $|x|, |y|, |f|, |g|$ respectively.

\begin{Def}
Let $A$ and $B$ be two $A_\infty$-algebras. A \emph{morphism of $A_{\infty}$-algebras} $f:A \to B$ is a family
$f_n: A^{\otimes n} \to B$
of graded $k$-linear maps of degree $1-n$ such that
$$\sum{(-1)^{r+st}f_{r+t+1}(\Id^{\otimes r} \otimes m_s \otimes \Id^{\otimes t})}= \sum{(-1)^w m_q(f_{i_1} \otimes \dots
\otimes f_{i_q})}$$ for all $n \geq 1$. Here, the sum run over all decompositions $n=r+s+t$ and over all decompositions $n=i_1+ \dots +i_q$ with $1 \leq q \leq n$  and all $i_s\geq 1$ respectively. The sign on the right-hand side
is given by $w=\sum_{j=1}^{q-1}(q-j)(i_j-1)$.\\
A morphism $f$ is a \emph{quasi-isomorphism} if $f_1$ is a quasi-isomorphism. It is \emph{strict} if $f_i=0$ for all $i \neq
1$.
\end{Def}
Our goal is to put an $A_\infty$-structure on the $\Ext$-algebras $E_m^n$. The
first step is to introduce an $A_\infty$-structure on the cohomology of an $A_\infty$-algebra (the so-called minimal model) and then realize our Ext-algebra as the cohomology of an
$A_\infty$-algebra, namely the $\hom$-algebra introduced earlier.
%\subsection{The existence and construction of a minimal model}\label{sec:minmod}
\begin{Theorem}[\cite{Kade79}]
Let $A$ be an $A_\infty$-algebra and $H^*(A)$ its cohomology. Then there is an $A_\infty$-structure on $H^*(A)$ such that
$m_1=0$ and $m_2$ is induced by the multiplication on $A$, and there is a quasi-isomorphism of $A_\infty$-algebras $H^*(A) \to
A$ lifting the identity of $H^*(A)$. Moreover, this structure is unique up to isomorphism of $A_\infty$-algebras.
\end{Theorem}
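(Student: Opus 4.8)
The plan is to prove this as an instance of \emph{homotopy transfer} (the homological perturbation lemma), following the explicit tree formulas of Merkulov \cite{Merk99} and Kontsevich--Soibelman \cite{Kont2001} that the later sections will rely on. In our application $A$ is the differential graded algebra $\hom(P_\bullet,P_\bullet)$, so $m_1$ is the differential, $m_2$ is composition, and the higher input operations vanish; the argument below applies verbatim to a general $A_\infty$-algebra once the higher $m_n$ on $A$ are fed into the formulas. The first step is to split the underlying complex. Since we work over $\C$, choosing a complement of the coboundaries inside the cocycles gives an embedding $i\colon H^*(A)\to A$ by harmonic representatives, and choosing a complement of the cocycles gives a projection $p\colon A\to H^*(A)$ with $p\circ i=\Id$. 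The map $\Id_A-i\circ p$ is then nullhomotopic, so there is a degree $-1$ map $h\colon A\to A$ with $m_1 h+h m_1=\Id_A-ip$. I would further normalise $h$ to satisfy the side conditions $hi=0$, $ph=0$ and $h^2=0$, which makes the transferred operations and, more importantly, their signs as simple as possible.

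Next I would define the transferred operations. Set $m_1^{H}=0$ and, recursively, auxiliary maps $\lambda_n\colon H^{\otimes n}\to A$ by $\lambda_1=i$ and
\[
\lambda_n=-\sum_{\substack{s+t=n\\ s,t\geq 1}}(-1)^{s}\,m_2\bigl(h\lambda_s\otimes h\lambda_t\bigr),\qquad n\geq 2,
\]
with the convention $h\lambda_1:=i$ and the overall sign fixed by the Koszul rule (the precise sign is convention-dependent). Putting $m_n^{H}=p\circ\lambda_n$, one unwinds the recursion into a sum over planar binary rooted trees with $n$ leaves, the leaves decorated by $i$, the internal edges by $h$, the vertices by $m_2$, and the root by $p$; in particular $m_2^{H}$ is exactly the product induced by $m_2$ on cohomology. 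For the comparison map I would set $f_1=i$ and define its higher components $f_n\colon H^{\otimes n}\to A$ by the same trees but with the root decorated by $h$ rather than $p$.

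The technical heart, and the step I expect to be the main obstacle, is verifying that the $m_n^{H}$ satisfy the Stasheff identities and that $f=(f_n)$ is a morphism of $A_\infty$-algebras. Both follow from the single relation $m_1 h+h m_1=\Id_A-ip$ together with the Leibniz rule for $m_1$ and the associativity of $m_2$, by an induction on $n$ in which $h$ is used to rewrite each summand; the genuine difficulty is entirely one of sign bookkeeping under the Koszul convention, since the tree-pruning operations that assemble the Stasheff relation come with signs that must cancel precisely. Because $f_1=i$ induces the identity on $H^*(A)$, the resulting $f$ is a quasi-isomorphism lifting $\Id_{H^*(A)}$, and by construction $m_1^{H}=0$ while $m_2^{H}$ is induced by the multiplication of $A$; this gives the existence statement.

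Finally, for uniqueness up to isomorphism I would invoke the standard fact that any quasi-isomorphism between $A_\infty$-algebras with vanishing differential is automatically an $A_\infty$-isomorphism: its linear component is already a linear isomorphism, and the higher components are inverted by an explicit induction. Given two minimal models $H'$ and $H''$ of $A$ with comparison quasi-isomorphisms $f'\colon H'\to A$ and $f''\colon H''\to A$, I would produce a quasi-isomorphism $H'\to H''$ lifting the identity of $H^*(A)$ (by transferring along $f'$ and $f''$, or by an obstruction-theoretic lift whose obstructions vanish since $H'$ and $H''$ carry zero differential), and the fact just quoted upgrades it to an isomorphism. This establishes the asserted uniqueness.
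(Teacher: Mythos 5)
The paper does not actually prove this theorem: it cites \cite{Kade79} and then records Merkulov's explicit construction (Proposition \ref{Pr:lambdan}, the recursion \eqref{eq:lambda} built from a homotopy $Q$ with $1-\Pi=dQ+Qd$) as the form of the result it will use. Your outline is exactly that construction --- your $h$, $i$, $p$ and tree/recursion formulas are the paper's $Q$, $\Pi$ and $\la_n$ up to sign conventions --- supplemented by the standard verification of the Stasheff identities and the standard uniqueness argument for minimal models, so it is correct and takes essentially the same approach as the source the paper relies on.
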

All known (at least to us) proofs inductively construct the model, but the approaches are slightly different. We follow here
Merkulov's more general construction \cite{Merk99} in the special situation of a differential graded algebra:
\begin{Prop}[\cite{Merk99}]\label{Pr:lambdan}
Take $(A,d)$ a differential graded algebra with grading shift $[\quad]$. Let $B \subset A$ be a vector subspace of $A$ and
$\Pi: A \to B$ a projection commuting with $d$. Assume that we are given a homotopy $Q: A \to A[-1]$ such that
\begin{equation} \label{condQ} 1-\Pi=dQ+Qd.
\end{equation}
Define $\la_n: A^{\otimes n} \to A$ for $n \geq 2$ by $\la_2(a_1,a_2):=a_1 \cdot a_2$
and recursively,
\begin{equation}\label{eq:lambda}\begin{split}
&\la_n(a_1,\ldots,a_n)\\ &= - \sum_{\substack{k+l=n\\k,l \geq 1}}{(-1)^{k+(l-1)(|a_1|+\dots+|a_k|)}Q(\la_k(a_1, \ldots, a_k))
\cdot Q(\la_l(a_{k+1}, \ldots , a_n))}
\end{split}.\end{equation}
for $n \geq 3$, setting formally $Q\la_1=-\Id$.
Then the maps $m_1=d$ and $m_n=\Pi(\la_n)$ define an $A_\infty$-structure for a minimal model on $B$.
\end{Prop}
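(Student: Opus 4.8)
The plan is to verify the Stasheff identities for the maps $m_n=\Pi\la_n$ by induction on $n$, using as the engine an auxiliary ``master identity'' for the uncorrected maps $\la_n\colon A^{\otimes n}\to A$ that records how $d$ interacts with the recursion \eqref{eq:lambda}. As a preliminary I would fix the bookkeeping: since $Q$ lowers degree by one, the product is of degree zero and $d$ raises degree by one, an easy induction on \eqref{eq:lambda} shows each $\la_n$ is homogeneous of degree $2-n$, so that $m_n=\Pi\la_n$ has the degree $2-n$ demanded of an $A_\infty$-operation, and the formal symbol $\la_1$, pinned down only through $Q\la_1=-\Id$, behaves in degree exactly like $d$.

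The heart of the argument is the master identity, proved by induction on $n$, of the schematic shape
\begin{equation*}
d\big(\la_n(a_1,\dots,a_n)\big)\;\pm\;\sum_{i=1}^{n}(-1)^{|a_1|+\cdots+|a_{i-1}|}\la_n(a_1,\dots,da_i,\dots,a_n)
=\sum_{\substack{k+l=n\\k,l\geq1}}(-1)^{\bullet}\,\Pi\la_k(a_1,\dots,a_k)\cdot Q\la_l(a_{k+1},\dots,a_n)+\cdots,
\end{equation*}
where the exponents are the signs forced by the Koszul convention and \eqref{eq:lambda}. I would establish it by differentiating the recursion: apply $d$ to each summand $Q\la_k\cdot Q\la_l$ via the Leibniz rule of the dga, substitute $dQ=1-\Pi-Qd$ from the homotopy relation \eqref{condQ}, and feed in the inductive hypotheses for $d\la_k$ and $d\la_l$. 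Three mechanisms then do the work: the terms carrying the factor $1$ recombine, via \eqref{eq:lambda} read backwards together with the associativity of the product in $A$, so that the purely multiplicative contributions telescope and cancel; the $Qd$-terms are absorbed by the inductive hypotheses; and the $\Pi$-terms survive, producing precisely the factors $\Pi\la_k=m_k$ that become the inner operations of Stasheff. The formal convention $Q\la_1=-\Id$ is what makes the extreme summands $k=1$ and $l=1$ reproduce the bare inputs $a_i$ (and, after Leibniz, the differentiated slots $da_i$), while $d^2=0$ discards the spurious contributions.

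With the master identity in place I would apply the projection $\Pi$. Because $\Pi$ is a chain map, $\Pi d\la_n=d\,\Pi\la_n=m_1m_n$, and $\Pi$ of the slot-differentiated terms becomes $m_n$ evaluated with an inner $m_1=d$; these account for the $s=1$ summands of the Stasheff identity. On the right-hand side the surviving $\Pi$-terms present the inner operations as $m_s=\Pi\la_s$, while the accompanying $Q$-factors, together with the lower operations, reassemble through the same recursion into the outer operation $m_{r+t+1}=\Pi\la_{r+t+1}$; collecting everything yields exactly $\sum_{r+s+t=n}(-1)^{r+st}m_{r+t+1}(\Id^{\otimes r}\otimes m_s\otimes\Id^{\otimes t})=0$. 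Restricting the inputs to $B$, where $\Pi$ acts as the identity and, for a minimal model, $d$ vanishes so that $m_1=0$, gives the asserted $A_\infty$-structure. The main obstacle throughout is the sign bookkeeping together with the self-similar reorganization in the master identity: one must check that the $1-\Pi$ of \eqref{condQ} splits each product into exactly the piece seen by the inner $m_s$ and the complementary piece fed into the next level of the recursion, and that the associativity-driven cancellations leave no residue. Tracking these signs consistently under the Koszul convention is where the computation is genuinely delicate.
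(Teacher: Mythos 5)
The paper does not actually prove this proposition: it is quoted from Merkulov \cite{Merk99} (in the dga special case, following the presentation of \cite{Lu2009}), and the surrounding text only explains how to choose $Q$ via the decomposition $A^n=B^n\oplus H^n\oplus L^n$. Your sketch is therefore a reconstruction rather than a parallel of anything in the paper, and it is a faithful reconstruction of the standard argument: the ``master identity'' you describe is precisely Merkulov's key lemma, namely that $d\la_n$ plus the signed sum of slot-differentiated $\la_n$'s equals a signed sum of terms $\la_{r+1+t}\bigl(\Id^{\otimes r}\otimes \Pi\la_s\otimes \Id^{\otimes t}\bigr)$ with $s\geq 2$, proved by Leibniz, the substitution $dQ=1-\Pi-Qd$, associativity (which kills the ``$1$''-contributions after re-expanding via \eqref{eq:lambda}), and induction (which absorbs the $Qd$-contributions); applying the chain map $\Pi$ then yields the Stasheff identities, with $Q\la_1=-\Id$ accounting for the $s=1$ and extreme summands. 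A quick check at $n=3$ confirms your identity in the form $d\la_3(a_1,a_2,a_3)+\la_3(da_1,a_2,a_3)+(-1)^{|a_1|}\la_3(a_1,da_2,a_3)+(-1)^{|a_1|+|a_2|}\la_3(a_1,a_2,da_3)=-\Pi(a_1a_2)\cdot a_3+a_1\cdot\Pi(a_2a_3)$. The two places where your text asserts rather than proves are exactly the two genuinely laborious points: the consistency of all Koszul signs with the exponent $k+(l-1)(|a_1|+\dots+|a_k|)$ in \eqref{eq:lambda}, and the ``reassembly'' step in which the top-level terms $\Pi\la_k\cdot Q\la_l$ combine with the inductively produced insertions to form complete outer operations $\la_{r+1+t}$. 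Since you identify these as the delicate points and the mechanism for each is correct, I regard the proposal as a correct proof strategy, and indeed the proof one finds in \cite{Merk99}; a fully written version would need to display the general master identity with explicit signs and verify the reassembly by a careful index count.
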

%\subsection{The choice of the map $Q$}\label{sec:Q}
Choosing $Q$ in a clever way simplifies computations, but our result will depend on this choice. We make our choices following \cite{Lu2009}. To define $Q$, we first divide the degree $n$ part $A^n$ of $A$ into three subspaces, for this, denote by $Z^n$
the cocycles of $A$ and by $B^n$ the coboundaries. As we work over a field, we can find subspaces $H^n$ and $L^n$ such that
$Z^n= B^n \oplus H^n$ and
\begin{equation} \label{Identif} A^n=B^n \oplus H^n \oplus L^n.\end{equation}
We identify the $n$th cohomology group $H^n(A)$ via \eqref{Identif} with $H^n$. We want to apply Proposition \ref{Pr:lambdan}
with the choice of a subspace $B=H^*(A)$, the projection $\Pi$ being the projection on the direct summand $H^*$ and the map $Q$ defined as follows:
\begin{enumerate}
\item When restricted to $Z^n$ by equation \eqref{condQ} and the condition that $d|_{Z^n}$ equals to zero, the map $Q$
    has to satisfy the relation
$$1-\Pi=dQ.$$
In particular, $dQ|_{H}$ has to be zero. We choose $Q|_H=0$. \item On $B^n$ the map $\Pi$ is zero, and therefore the map
$Q|_B$ has to satisfy $1=dQ$, i.e. $Q$ has to be a preimage of $d$. We want to choose this preimage as small as possible
i.e. with no non-trivial terms from $Z^n$ (they would anyway be annihilated by $d$). Since $d$ is injective on $L$, we can
choose $Q|_B=(d|_L)^{-1}$. \item  We briefly outline how to determine $Q$ restricted to $L$ (although it won't play any
role in our computations later on). From \eqref{condQ} we get the restriction $$1=Qd+dQ.$$ As $d(a) \in B \text{ for all }
a \in A$ we see that $Qd|_{L}=(d|_L)^{-1}d|_L=1$, so we can define $Q|_{L}=0$.
\end{enumerate}

Now the construction of a minimal model applies to our situation if
we choose $A:=A_m^n:=\hom(P_\bullet, P_\bullet)$, where $P_\bullet$ is the direct sum of all linear projective resolutions of $M(\la)$, $\la\in\La_m^n$ from \ref{Th:constrproj}, and $E=\Ext_m^n=H^*(A)$.

In the following we give an upper bound for the $l$ with $m_l \neq 0$. Already in the case $n=2$ we can show that not all
$m_l$ for $l>2$ vanish and therefore our specific model provides interesting examples of $A_\infty$-algebras with non-trivial
higher multiplications. We start by stating the following Lemma generalizing the fact that the multiplication of two morphisms
can only be non zero if they lie in appropriate $\hom$-spaces.
\begin{Lemma}\label{Le:Laainf}
Let $a_i$, $1 \leq i \leq l$ be homogeneous elements of degree $k_i$ in $E_m^n$ of the form
$$a_i \in \Ext^{k_i}(M(\mu_i),M(\nu_i)) \ 1 \leq i \leq l.$$
Then we have $\la_l(a_1,...,a_l)=0$ unless $\nu_i =\mu_{i+1}$ for all $1 \leq i \leq
l-1$; and if $\la_l(a_1,...,a_l) \neq 0$ we have
$\la_l(a_1,...,a_l) \in \hom^{\Sigma k_i+2-l}(P_\bullet(\mu_1),P_\bullet(\nu_l)).$
\end{Lemma}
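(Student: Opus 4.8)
The plan is to prove both assertions simultaneously by induction on $l$, making essential use of the block decomposition
$$A = A_m^n = \hom(P_\bullet, P_\bullet) = \bigoplus_{\la, \mu \in \La_m^n} \hom(P_\bullet(\la), P_\bullet(\mu)),$$
in which each summand $\Id_\la A \Id_\mu = \hom(P_\bullet(\la), P_\bullet(\mu))$ is itself a subcomplex for the differential $d(f) = d\circ f - (-1)^p f \circ d$. Because $d$ is block-diagonal, the cocycles $Z^\bullet$, coboundaries $B^\bullet$ and cohomology $H^\bullet$, and hence a complement $L^\bullet$ as in \eqref{Identif}, all split compatibly with this block grading; consequently the projection $\Pi$ and the homotopy $Q$ preserve source and target labels, and $Q$ lowers the cohomological degree by exactly one. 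Writing $a_i \in \Id_{\mu_i} A \Id_{\nu_i}$ for the given homogeneous elements of degree $k_i$, these two facts are all the argument needs, and no sign has to be tracked since the statement concerns only vanishing and membership in a single homogeneous block.

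For the base case $l = 2$ one has $\la_2(a_1, a_2) = a_1 \cdot a_2 = \Id_{\mu_1} a_1 \Id_{\nu_1} \Id_{\mu_2} a_2 \Id_{\nu_2}$. Since the idempotents are orthogonal this vanishes unless $\nu_1 = \mu_2$; when $\nu_1 = \mu_2$ the product lies in $\Id_{\mu_1} A \Id_{\nu_2}$, and as multiplication in $A$ adds cohomological degrees it is homogeneous of degree $k_1 + k_2 = k_1 + k_2 + 2 - 2$, i.e. in $\hom^{\Sigma k_i + 2 - l}(P_\bullet(\mu_1), P_\bullet(\nu_l))$ for $l = 2$.

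For the inductive step ($l \geq 3$) I would expand $\la_l(a_1, \ldots, a_l)$ via the recursion \eqref{eq:lambda} as a signed sum over decompositions $l = p + q$, $p,q \geq 1$, of terms $Q(\la_p(a_1, \ldots, a_p)) \cdot Q(\la_q(a_{p+1}, \ldots, a_l))$. For $2 \leq p, q \leq l-1$ the induction hypothesis gives that $\la_p(a_1,\ldots,a_p)$ vanishes unless $\nu_i = \mu_{i+1}$ for $1 \leq i \leq p-1$, in which case it lies in $\Id_{\mu_1} A \Id_{\nu_p}$ in degree $k_1 + \cdots + k_p + 2 - p$, and symmetrically for $\la_q$; the terms with $p=1$ (resp. $q=1$) instead use the formal convention $Q\la_1 = -\Id$, giving $-a_1$ (resp. $-a_l$), which fits the same label-and-degree pattern. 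Since $Q$ preserves labels and drops degree by one, $Q(\la_p(\ldots))$ lies in $\Id_{\mu_1} A \Id_{\nu_p}$ in degree $k_1 + \cdots + k_p + 1 - p$, and $Q(\la_q(\ldots))$ in $\Id_{\mu_{p+1}} A \Id_{\nu_l}$ in degree $k_{p+1} + \cdots + k_l + 1 - q$. Their product is nonzero only if in addition $\nu_p = \mu_{p+1}$ (composability of the middle idempotents), so every summand vanishes unless $\nu_i = \mu_{i+1}$ for all $1 \leq i \leq l-1$, which is the first assertion. When these hold, each surviving term lies in $\Id_{\mu_1} A \Id_{\nu_l}$ in degree $(k_1 + \cdots + k_p + 1 - p) + (k_{p+1} + \cdots + k_l + 1 - q) = \Sigma k_i + 2 - l$ using $p + q = l$; summing over decompositions stays in this one block, proving the second assertion.

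The only genuinely structural point, and the step I would treat most carefully, is the first paragraph's claim that $L^\bullet$, and hence $\Pi$ and $Q$, can be and are chosen block-diagonally with respect to the source/target decomposition of $A$; this rests on the block-diagonality of $d$, which lets one split $Z^n, B^n, H^n, L^n$ inside each $\hom(P_\bullet(\la), P_\bullet(\mu))$ separately. Everything else is the routine double bookkeeping of labels and cohomological degrees threaded through Merkulov's recursion.
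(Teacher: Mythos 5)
Your proof is correct and follows exactly the route the paper indicates (induction on $l$ via the recursion of Proposition \ref{Pr:lambdan}, with the details deferred to \cite{Klam10}): you correctly isolate the one structural input, namely that $d$ is block-diagonal for the decomposition $A=\bigoplus\hom(P_\bullet(\la),P_\bullet(\mu))$ so that $\Pi$ and $Q$ can be chosen to preserve the source/target labels while $Q$ drops the degree by one, and the label/degree bookkeeping through the recursion is accurate. Nothing to add.
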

\begin{proof}
The proof goes by induction on $l$, using Theorem \ref{Pr:lambdan}, see \cite{Klam10}.
\end{proof}

\begin{Theorem}[General Vanishing Theorem]\label{Th:genvanish}
The $A_\infty$-structure on $E_m^n$ satisfies $m_l=0$ for all $l >n^2+2$.
\end{Theorem}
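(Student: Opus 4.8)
The plan is to reduce the whole statement to a collision between two length estimates that are already packaged in the preceding lemmas, so that the degree shift $2-l$ built into the operation $m_l$ becomes the deciding quantity. Since $m_l$ is multilinear and $E_m^n$ is spanned by homogeneous elements, it suffices to prove $m_l(a_1,\dots,a_l)=0$ on basis elements $a_i\in\Ext^{k_i}(M(\mu_i),M(\nu_i))$, which I may assume to be nonzero. Recall from Proposition \ref{Pr:lambdan} that $m_l=\Pi\circ\lambda_l$, so it is enough to show $\lambda_l(a_1,\dots,a_l)=0$.

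First I would feed these elements into Lemma \ref{Le:Laainf}. If the chain is not composable, i.e. $\nu_i\neq\mu_{i+1}$ for some $i$, then $\lambda_l$ already vanishes and there is nothing to prove. In the composable case the lemma places $\lambda_l(a_1,\dots,a_l)$ in the single hom-space $\hom^{K+2-l}(P_\bullet(\mu_1),P_\bullet(\nu_l))$, where $K=\sum_{i=1}^l k_i$. This is the key reduction: it converts a statement about the recursively defined higher product into a statement about one $\hom$-group between two fixed projective resolutions, with a homological degree that visibly carries the shift $2-l$.

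Next I would telescope the lengths. Writing $\ell_i:=l(\nu_i)=l(\mu_{i+1})$, the Delorme--Schmid bound (Lemma \ref{Le:Extrest}) forces $k_i\le l(\mu_i)-l(\nu_i)=\ell_{i-1}-\ell_i$ for each $i$, and summing gives $K\le l(\mu_1)-l(\nu_l)$. On the other hand, Lemma \ref{Le:mapprojres} applied to the target hom-space forces $l(\mu_1)\le l(\nu_l)+n^2+(K+2-l)$ whenever $\lambda_l\neq0$. Substituting the first inequality into the second cancels the common length difference $l(\mu_1)-l(\nu_l)$ and leaves $l\le n^2+2$. Hence for $l>n^2+2$ the target $\hom$-space vanishes, so $\lambda_l=0$ and therefore $m_l=\Pi(\lambda_l)=0$.

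The argument is short because all the real work is front-loaded into Lemmas \ref{Le:Laainf}, \ref{Le:Extrest} and \ref{Le:mapprojres}; the theorem itself is a two-line arithmetic collision. The step I expect to demand the most care --- and the only genuine obstacle --- is keeping the orientations of the two length bounds consistent: Delorme--Schmid bounds each individual Ext-degree from above by the local drop $k_i\le\ell_{i-1}-\ell_i$ (and, since $k_i\ge0$, already forces $\ell_{i-1}\ge\ell_i$), whereas Lemma \ref{Le:mapprojres} bounds the \emph{total} length drop from above in terms of $K$ and the shift $2-l$. These two inequalities must point in compatible directions so that $K$ and the telescoped difference $l(\mu_1)-l(\nu_l)$ cancel; once the length conventions are fixed correctly (nonzero extensions requiring $l(\mu_i)\ge l(\nu_i)$), the bound $l\le n^2+2$ drops out at once.
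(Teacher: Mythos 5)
Your proposal is correct and follows essentially the same route as the paper's own proof: reduce to $\lambda_l$ on composable homogeneous basis elements via Lemma \ref{Le:Laainf}, bound $\sum k_i$ by the telescoped length difference via Lemma \ref{Le:Extrest}, and collide this with the bound of Lemma \ref{Le:mapprojres} on the target hom-space to force $l\le n^2+2$. The only cosmetic difference is that the paper writes $k_i=l(\mu_i)-l(\mu_{i+1})-d_i$ with $d_i\ge 0$ and concludes $\sum d_i\le n^2+2-l$, which is exactly your inequality in disguise.
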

\begin{proof} We claim that $\la_l=0$ if $l >n^2+2$.
Since $\la_l$ is linear, it is enough to show the assertion on nonzero homogeneous basis elements and therefore by Lemma
\ref{Le:Laainf} we can take $a_i \in \Ext^{k_i}(M(\mu_i),M(\mu_{i+1}))$ for $1 \leq i \leq l.$
By Lemma \ref{Le:Extrest} there are $d_i \geq 0$ such that
$k_i=l(\mu_i)-l(\mu_{i+1})-d_i$ and therefore
$\sum_{i=1}^l{k_i}=l(\mu_1)-l(\mu_{l+1})-\sum_{i=1}^l d_i.$
>From Lemma~\ref{Le:Laainf} we know that $\la_l(a_1,...,a_l) \in \hom^{\Sigma k_i+2-l}(P_\bullet(\mu_1),P_\bullet(\nu_l)).$
Assume $\la_l \neq 0$, so, by Lemma \ref{Le:mapprojres} about the morphisms between our chosen projective resolutions, we know
that
$l(\mu_1) \leq l(\mu_{l+1})+n^2 +\sum k_i+2-l,$
thus
\begin{eqnarray*}
l(\mu_1) &\leq& l(\mu_{l+1})+l(\mu_1)-l(\mu_{l+1})-\sum_{i=1}^l {d_i}+2-l +n^2,
\end{eqnarray*}
which is equivalent to $\sum_{i=1}^l {d_i} \leq n^2+2 -l$.
Since $\sum_{i=1}^l {d_i} \geq 0$, we get  $0\leq n^2+2-l$, equivalently $l\leq n^2+2$; providing the
asserted upper bound.
\end{proof}
\subsection{Explicit computations for $E_N^{1}$ and $E_{N-1}^2$}
In the previous section we established general vanishing results for the higher multiplications; in this section we describe
explicit models for our small examples $n=1$ and $n=2$.
%\subsubsection{$\Ext(\bigoplus M(\la), \bigoplus M(\la)) $ for $M(\la) \in K_N^1-\mod$}\label{sec:Exp1}
The first result in this situation is the following:
\begin{Theorem}[1st vanishing Theorem] \label{Th:1stvanish}
The algebra $E_1^N$ is formal, i.e. there is a minimal model such that $m_n =0$ for all $n \geq 3$.
\end{Theorem}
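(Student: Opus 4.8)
The plan is to reduce everything to the vanishing of $m_3$ and then read it off from the explicit chain maps underlying Theorem~\ref{mainalg1}. Applying the General Vanishing Theorem~\ref{Th:genvanish} in the case $n=1$ immediately gives $m_l=0$ for all $l>n^2+2=3$, so the only thing left to prove is $m_3=0$. By linearity, and by Lemma~\ref{Le:Laainf} (which makes $\la_3$ vanish on non-composable inputs), it suffices to treat composable triples $a_i\in\Ext^{k_i}(M(\mu_i),M(\mu_{i+1}))$ of the basis elements of $E_N^1$.

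The key structural input I would use is that the chosen representatives multiply cleanly. Concretely, I take the complement $H$ in the decomposition~\eqref{Identif} to be the span of the explicit chain maps $\Id^{(j)}_{(l)}$ and $F^{(j)}_{(l)}$ from Proposition~\ref{Prop:IdF}; by Shelton's dimension formula (Theorem~\ref{Th:dim}) these represent a homogeneous basis of $E_N^1$, so this is a legitimate choice, and then $Q|_H=0$ by the construction of the homotopy. The composition relations recorded just before Theorem~\ref{mainalg1}, namely
\begin{equation*}
\Id^{(j)}_{(l)}\cdot\Id^{(l)}_{(m)}=\Id^{(j)}_{(m)},\quad F^{(j)}_{(l)}\cdot F^{(l)}_{(m)}=0,\quad \Id^{(j)}_{(l)}\cdot F^{(l)}_{(m)}=F^{(j)}_{(m)}=F^{(j)}_{(l)}\cdot\Id^{(l)}_{(m)},
\end{equation*}
hold as genuine equalities in $\hom(P_\bullet,P_\bullet)$. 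They say that the product (composition) of any two basis chain maps is again either zero or a basis chain map, hence lies in $H$. Consequently $Q(\la_2(a_i,a_{i+1}))=Q(a_i\cdot a_{i+1})=0$ for every composable pair of basis elements.

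Given this, the vanishing of $\la_3$ is immediate from Merkulov's recursion. Spelling out Proposition~\ref{Pr:lambdan} for $n=3$, with the convention $Q\la_1=-\Id$, the two summands $(k,l)=(1,2)$ and $(k,l)=(2,1)$ yield
\begin{equation*}
\la_3(a_1,a_2,a_3)=(-1)^{|a_1|+1}\,a_1\cdot Q(\la_2(a_2,a_3))+Q(\la_2(a_1,a_2))\cdot a_3 .
\end{equation*}
Each summand carries a factor $Q(\la_2(\,\cdot\,,\,\cdot\,))$, which we have just seen vanishes, so $\la_3\equiv 0$ on basis elements and hence $m_3=\Pi\la_3=0$. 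Combined with $m_l=0$ for $l\geq 4$ this gives the claimed formality.

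The only genuine work, and the step I expect to be the real obstacle, is establishing that the four composition relations above hold on the nose rather than merely up to homotopy; this is exactly the explicit chain-map computation carried out in the $n=1$ subsection, and it is what forces $Q$ to annihilate all products. This is also the feature distinguishing $n=1$ from $n=2$: for $n=2$ certain products of representatives acquire nonzero components in $B\oplus L$, so $Q$ no longer kills them, $\la_3$ need not vanish, and one genuinely obtains nontrivial $m_3$ as in Theorem~\ref{Th:2ndvanish}.
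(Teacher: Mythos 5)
Your proof is correct and takes essentially the same approach as the paper: the composition relations among the representatives $\Id^{(j)}_{(l)}$, $F^{(j)}_{(l)}$ hold on the nose in $\hom(P_\bullet,P_\bullet)$, so all products of elements of $H$ stay in $H$, the homotopy $Q$ (chosen with $Q|_H=0$) annihilates them, and Merkulov's recursion collapses. The only cosmetic difference is that you invoke Theorem~\ref{Th:genvanish} to dispose of $m_l$ for $l\geq 4$, whereas the observation $Q(\la_2(\cdot,\cdot))=0$ already forces $\la_n=0$ for all $n\geq 3$ by induction through the recursion \eqref{eq:lambda}, which is how the paper concludes.
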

\begin{proof}
Recall that all multiplication rules in the algebra $E_N^1$ are already determined in $A_N^1=\hom(P_\bullet, P_\bullet)$. Therefore, for all
elements $a_1, a_2 \in \Ext(\oplus M(\la), \oplus M(\la))=H^*(\hom(P_\bullet, P_\bullet))$ identified with the subspace $H^*$
via the decomposition from \eqref{Identif}, the product $a_1 \cdot a_2$ also lies in the subspace $H^*$ and has no boundary
component in $B^*$. Since we have chosen $Q|_H=0$, we obtain
$Q(a_1 \cdot a_2)=0.$
Using the construction of the higher multiplications in Proposition \ref{Pr:lambdan} one gets
$m_n = 0$ for all $n \geq 3$.
\end{proof}
\subsubsection{The case $E_{N-1}^2$}\label{sec:Exp2}
The case of $n=2$ turns out to be more interesting than the case $n=1$ studied before, since we have non-vanishing higher multiplications. In contrast to the previous example this
phenomenon is possible, since some multiplications in $A_{N-2}^1=\hom(P_\bullet, P_\bullet)$ are only homotopic to their product in the
$\Ext$-algebra. This yields the following theorem:
\begin{Theorem}\label{Th:la3}
In the minimal model above, there are non-vanishing $m_3$.
\end{Theorem}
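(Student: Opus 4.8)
The plan is to exhibit a single composable triple $a_1,a_2,a_3$ of extension classes for which Merkulov's recursion (Proposition \ref{Pr:lambdan}) produces a nonzero value $m_3(a_1,a_2,a_3)=\Pi(\lambda_3(a_1,a_2,a_3))$; this already suffices. First I would specialize \eqref{eq:lambda} to $n=3$. The only decompositions are $3=1+2$ and $3=2+1$, so with the convention $Q\lambda_1=-\Id$ and $\lambda_2(x,y)=x\cdot y$ one obtains (with signs dictated by \eqref{eq:lambda})
\begin{equation*}
\lambda_3(a_1,a_2,a_3)=Q(a_1\cdot a_2)\cdot a_3-(-1)^{|a_1|}\,a_1\cdot Q(a_2\cdot a_3).
\end{equation*}
Hence $m_3$ can be nonzero only when at least one coboundary correction $Q(a_1\cdot a_2)$ or $Q(a_2\cdot a_3)$ is nonzero. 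By the chosen homotopy ($Q|_{H}=0$ and $Q|_{B}=(d|_L)^{-1}$, from \eqref{condQ} and \eqref{Identif}) this happens precisely when the chain-level product of the chosen representatives does not itself lie in the complement $H^*$, i.e.\ equals the $\Ext$-product only up to a genuine coboundary in $B^*$. This is exactly the mechanism that is unavailable for $n=1$ (proof of Theorem \ref{Th:1stvanish}, where every product of representatives stays in $H^*$) but, as observed for $n=2$, becomes available.

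Next I would select the triple so that the two pairwise $\Ext$-products vanish, placing us in the Massey-product situation where $m_3$ is a well-defined secondary operation. Concretely I would take $a_1,a_2,a_3$ to be three consecutive degree-one arrows along a path whose two length-two sub-paths represent $0$ in $E_{N-1}^2$ (for instance a path of three arrows of the same colour, so that the relations of Theorem \ref{mainalg2} force $m_{2}(a_1,a_2)=m_{2}(a_2,a_3)=0$). Using the explicit linear projective resolutions of Theorem \ref{Th:constrproj}, I would realise each $a_i$ as an explicit chain map $P_\bullet(\mu_i)\to P_\bullet(\mu_{i+1})[k_i]$, compute the compositions $a_1\cdot a_2$ and $a_2\cdot a_3$ inside $A^2_{N-1}=\hom(P_\bullet,P_\bullet)$, and verify that—although each represents $0$ in cohomology—at least one is a nonzero coboundary, so that $x:=Q(a_1\cdot a_2)$ and $y:=Q(a_2\cdot a_3)$ are the corresponding preimages in $L^*$ under $(d|_L)^{-1}$.

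Finally I would substitute $x$ and $y$ into the displayed formula, apply $\Pi$, and identify the resulting cocycle with a nonzero class in $\Ext^{k_1+k_2+k_3-1}$. Its non-triviality would be confirmed by comparing with the dimension of this $\Ext$-space predicted by Shelton's formulas (Theorem \ref{Th:dim}), together with Lemma \ref{Le:Laainf}, which locates $\lambda_3$ in $\hom^{\Sigma k_i+2-3}(P_\bullet(\mu_1),P_\bullet(\mu_4))$. The main obstacle is entirely computational: one must build the chain maps and the homotopy $(d|_L)^{-1}$ explicitly through the resolutions of $K^2_{N-1}$, keep all Koszul and Convention \ref{Not:sign} signs correct, and check that the surviving term is not itself a coboundary after projection by $\Pi$. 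These are precisely the long calculations carried out in full in \cite{Klam10}; for the present statement it is enough to produce one triple for which the coboundary correction feeds a term into $\lambda_3$ that survives $\Pi$.
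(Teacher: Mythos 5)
Your strategy is the same as the paper's: the paper gives no in-text computation for Theorem \ref{Th:la3} and simply refers to the explicit list of $m_3$'s in \cite{Klam10}, after noting precisely the mechanism you identify, namely that for $n=2$ some products of chain-level representatives agree with their $\Ext$-products only up to a nonzero coboundary, so that the homotopy $Q$ applied to them is nonzero. Your specialization of \eqref{eq:lambda} to
$\lambda_3(a_1,a_2,a_3)=Q(a_1\cdot a_2)\cdot a_3-(-1)^{|a_1|}a_1\cdot Q(a_2\cdot a_3)$
is correct, and the criterion you extract from $Q|_{H}=0$ (that $m_3$ can only be fed by a nonzero $B^*$-component of a pairwise product) is the right one.

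That said, there is a genuine gap: an existence statement needs a witness, and your text never produces one. You do not name a concrete triple in $E_{N-1}^2$, do not compute the chain-level products through the resolutions of Theorem \ref{Th:constrproj}, and do not verify that the resulting cocycle survives $\Pi$; comparing with Shelton's dimension formulas only shows that the target $\Ext$-space is nonzero, not that your particular element is a nonzero class. Two smaller points: requiring both pairwise $\Ext$-products to vanish is a convenient (Massey-product) sufficient condition for locating candidates, but it is not necessary --- all that matters is that $a_1\cdot a_2$ or $a_2\cdot a_3$ has a nonzero component in $B^*$ under the decomposition \eqref{Identif}, which can occur even when the product is nonzero in cohomology; and non-triviality of $\Pi(\lambda_3)$ must be checked directly (e.g.\ by a degree argument as in Proposition \ref{Prop:IdF} via Lemma \ref{Le:mapprojres}, or by explicit decomposition), not inferred from dimensions. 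As written, your proposal is a correct roadmap for the calculation carried out in \cite{Klam10} rather than a proof; to close it you must run the computation for at least one explicit triple.
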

A complete list of all higher multiplications $m_3$ is given in \cite{Klam10}.
\subsubsection{Vanishing of higher multiplications}
Detailed knowledge about the structure of projective resolutions provides a stronger
vanishing result than in the general case (see \cite{Klam10}):
\begin{Theorem}[2nd Vanishing Theorem]
\label{Th:2ndvanish} The $A_\infty$-structure on $E_{N-2}^2$ given by the construction above satisfies
$$m_n=0 \ \forall n \geq 4.$$
\end{Theorem}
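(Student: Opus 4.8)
The plan is to refine the degree-counting argument of Theorem \ref{Th:genvanish}, whose crude form only yields $m_l=0$ for $l>n^2+2=6$, by feeding in the explicit structure of the $n=2$ resolutions. First I would reduce, exactly as in the general theorem, to checking $\la_l=0$ (equivalently $\Pi\la_l=0$) on homogeneous composable basis elements $a_i\in\Ext^{k_i}(M(\mu_i),M(\mu_{i+1}))$, $1\le i\le l$, which is legitimate by multilinearity together with the composability clause of Lemma \ref{Le:Laainf}. Writing $d_i:=l(\mu_i)-l(\mu_{i+1})-k_i\ge 0$ (Lemma \ref{Le:Extrest}) and combining Lemma \ref{Le:Laainf} with Lemma \ref{Le:mapprojres} reproduces the inequality $\sum_{i=1}^l d_i\le n^2+2-l=6-l$; the entire difficulty is to sharpen the constant from $6$ to $3$, which would force $l\le 3$ and hence $m_l=0$ for $l\ge 4$.

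The reason the general bound is not tight for the composite is that it treats each elementary composition $P(\nu)\to P(\nu')$ in isolation, where the worst case $l(\nu)-l(\nu')=n^2=4$ is attained. For $n=2$ one has $\sum_i\nes_{\und{\nu}}(i)\in\{0,1\}$, equal to $1$ only when $\nu$ carries the rigid local pattern $\down\down\up\up$ producing a single pair of nested cups; moreover, by the refined lower bound of Proposition \ref{Le:termprojres}, in that nested case the summand $P(\nu)$ must sit at the extreme top $l(\nu)=l(\la)-i$ of its allowed range. I would therefore not iterate Lemma \ref{Le:mapprojres} blindly but instead follow the binary Merkulov recursion $\la_l=-\sum_{k+l'=l}\pm\,Q(\la_k)\cdot Q(\la_{l'})$ of \eqref{eq:lambda}, factoring each term through the intermediate resolution $P_\bullet(\mu_{k+1})$. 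The key point is that $Q$, being $(d|_L)^{-1}$ on coboundaries and $0$ on $H^*\oplus L^*$, annihilates every factor that is either zero or a cocycle, so a nonzero $\la_l$ requires an unbroken chain of genuine coboundaries through all the intermediate complexes. Using the explicit chain maps representing the generators of $E_{N-2}^2$ from Theorem \ref{mainalg2} together with the explicit polynomials $p_{\la,\mu}$ for $n=2$ from Theorem \ref{Th:constrproj}, one checks case by case that for $l\ge 4$ every summand of the recursion contains a factor $Q(\la_k)$ that is forced to vanish or to be a cocycle, so that $\la_l=0$; the nonvanishing of $m_3$ from Theorem \ref{Th:la3} shows that $l=3$ is genuinely the first place where such a chain can close up.

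The main obstacle is precisely this explicit case analysis: one must track, through the binary tree of compositions, which indecomposable projective summands $P(\nu)$ occur in each $P_\bullet(\mu_j)$, compute the associated chain maps and the homotopy $Q$ on the nose, and control all Koszul and Merkulov signs in \eqref{eq:lambda}. This is the ``detailed knowledge of the projective resolutions'' referred to in the text, and the full bookkeeping is carried out in \cite{Klam10}. A secondary subtlety is confirming that the $\Ext^2$ generators (the red and green arrows of Theorem \ref{mainalg2}, which carry defect $d_i>0$ because they are not products of $\Ext^1$-arrows) are correctly accounted for, since these are the terms most likely to survive several applications of $Q$ and hence are the critical cases for the improved bound.
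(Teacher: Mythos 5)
The paper does not actually prove this theorem: it states it with the one\nobreakdash-line remark that ``detailed knowledge about the structure of projective resolutions provides a stronger vanishing result'' and refers entirely to \cite{Klam10}. So there is no in-paper argument to measure you against, and your outline is consistent with the strategy the paper gestures at: the general bound $\sum_i d_i\le n^2+2-l$ only gives $l\le 6$ for $n=2$, and the improvement must come from feeding the explicit $n=2$ resolutions into the Merkulov recursion \eqref{eq:lambda}. Your two structural observations --- that $\sum_i\nes_{\und{\nu}}(i)\le 1$ for $n=2$, and that Proposition \ref{Le:termprojres} pins $l(\nu)$ to the top of its allowed window precisely in the nested case --- are correct and are exactly the kind of rigidity such a refinement needs.

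As a proof, however, the proposal has a genuine gap: the decisive sentence, ``one checks case by case that for $l\ge 4$ every summand of the recursion contains a factor $Q(\la_k)$ that is forced to vanish,'' is the entire content of the theorem, and you defer it to the same reference the paper cites. Two further cautions. First, a purely numerical sharpening of the constant from $6$ to $3$ is unlikely to be available: an inequality $\sum_i d_i\le 3-l$ would force $\sum_i d_i=0$ already at $l=3$, which need not hold for the triples on which $m_3\neq 0$ (Theorem \ref{Th:la3}); so the recursion-tracking route of your second paragraph is not an optional refinement but the only viable one, and it is precisely the part left unexecuted. Second, the claim that $Q$ ``annihilates every factor that is either zero or a cocycle'' is wrong as written: $Q$ vanishes on $H\oplus L$ but equals $(d|_L)^{-1}$ on $B$, and each $\la_k$ built from cocycles is itself a cocycle lying in $Z=B\oplus H$, so $Q(\la_k)$ survives exactly when $\la_k$ has a nonzero coboundary component relative to the splitting \eqref{Identif} --- which is the mechanism your following clause about ``genuine coboundaries'' correctly describes, but the two statements as given contradict each other and only the latter is the one you may use.
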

\subsection{Ideas how to prove non-formality}
In the previous section we proved that there is a minimal model with non-vanishing higher multiplications but this does not
answer the question whether the algebra is formal. To show that the algebra is not formal, we have to prove that no model
exists such that $m_n=0$ for all $n \geq 3$. As a tool one could use Hochschild cohomology. Given a dg-Algebra $A$ one can
compute its Hochschild cohomology by using the $A_\infty$-structure on a minimal model of $A$ (cf. \cite[Lemma
B.4.1]{lefe2003} and \cite{kade1988}). Assume that we have found a minimal model on $H^*(A)$ with $m_n=0$ for $3 \leq n \leq
p-1$. Then the multiplication $m_p$ defines a cocycle for the Hochschild cohomology of $A$ by the construction in \cite[Lemma
B.4.1]{lefe2003}. If we can prove that this class is not trivial, we are done and have shown that the algebra is not formal.
If we cannot, we have to modify our model such that $m_p=0$ and then analyze if $m_{p+1}$ vanishes. A detailed discussion of
this topic would go beyond the scope of this article. Therefore we only state the following conjecture:
\begin{Conjecture}
In general, the algebra $E_m^n$ is not formal.
\end{Conjecture}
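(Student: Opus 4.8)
The plan is to establish non-formality through the obstruction-theoretic mechanism already sketched in the subsection preceding the conjecture, turning the heuristic use of \cite[Lemma B.4.1]{lefe2003} into an actual nonvanishing statement. Recall that $E_m^n$ is formal precisely when the canonical minimal model $(E_m^n, m_2, m_3, \dots)$ produced by Proposition \ref{Pr:lambdan} is $A_\infty$-isomorphic to $(E_m^n, m_2, 0, 0, \dots)$. I would organise the argument by the smallest $p\ge 3$ with $m_p\neq 0$: once $m_3=\dots=m_{p-1}=0$, the Stasheff identity in arity $p+1$ (with $m_1=0$ throughout) collapses to $\delta m_p=0$, where $\delta$ is the Hochschild coboundary built from the associative product $m_2$. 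Hence $m_p$ represents a class in $\operatorname{HH}^{2}(E_m^n,E_m^n)$, and an $A_\infty$-automorphism that is the identity below order $p$ changes $m_p$ exactly by a $\delta$-coboundary. Therefore $m_p$ can be gauged away iff its class vanishes, and the whole question reduces to proving that this class is nonzero.

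The decisive simplification comes from the second (internal) grading. Both $E_m^n$ and all Hochschild cochains are bigraded, by the homological $\Ext$-degree and by the $q$-degree coming from the shifts $\langle\, \cdot\, \rangle$; the operations $m_p$ are homogeneous of homological degree $2-p$ and internal degree $0$, so the obstruction class lives in a single bidegree of $\operatorname{HH}^{2}$ and only that bidegree is relevant. For $n=2$ this is concrete: the computation behind Theorem \ref{Th:la3} and Theorem \ref{Th:2ndvanish} gives a canonical minimal model with $m_3\neq 0$ and $m_k=0$ for $k\ge 4$, so the first and only obstruction is $[m_3]$. I would realise $[m_3]$ as a matric triple Massey product: choose $a,b,c$ among the generators of Theorem \ref{mainalg2} with $ab=0=bc$ (the vanishing being forced for degree reasons by the relations there and by Lemma \ref{Le:Extrest}), so that $\langle a,b,c\rangle$ is defined and $m_3(a,b,c)$ represents it. Since Massey products are invariants of the quasi-isomorphism type, it suffices to check that $m_3(a,b,c)$ is nonzero modulo the indeterminacy $a\cdot E_{N-2}^2+E_{N-2}^2\cdot c$. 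The value of $m_3$ is read off from the chain-level recursion \eqref{eq:lambda}, and the indeterminacy is a finite-dimensional subspace computable directly from the quiver with relations; establishing that the former is not contained in the latter is a bounded linear-algebra check, and it proves $E_{N-2}^2$ non-formal.

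For arbitrary $(m,n)$ the same strategy applies in principle but now requires producing, for every $n\ge 2$, an \emph{essential} higher product. The natural route is to combine the vanishing input with the dimension input: Lemma \ref{Le:Extrest} provides tuples $a_1,\dots,a_p$ of $\Ext$-classes whose consecutive $m_2$-products, and more generally whose lower operations, vanish for length reasons, while Shelton's recursion (Theorem \ref{Th:dim}) forces the target $\Ext$-space to be large enough that the corresponding higher Massey product cannot be absorbed into its indeterminacy. In the bigrading this amounts to locating a bidegree in which $\operatorname{HH}^{2}(E_m^n,E_m^n)$ is nonzero and in which the canonical minimal model deposits a nonzero cocycle; the explicit linear projective resolutions of Theorem \ref{Th:constrproj}, together with the nesting bounds of Proposition \ref{Le:termprojres}, are exactly the tools needed to pin down both the cocycle and the coboundary space.

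I would expect the main obstacle to be the uniform control of the indeterminacy, equivalently of the Hochschild coboundaries, as $(m,n)$ varies. Exhibiting a nonzero higher product in some fixed model is comparatively easy and is essentially done for $n=2$; what is genuinely hard is to show that this product \emph{survives every gauge change}, i.e. that its Hochschild class is nonzero rather than merely its representative. For $n\ge 3$ this is aggravated by the absence of a quiver-with-relations presentation of $E_m^n$, since Theorems \ref{mainalg1} and \ref{mainalg2} stop at $n=2$, so the coboundary space must instead be estimated intrinsically from the Kazhdan-Lusztig combinatorics and the structure of the resolutions. This is precisely the point at which a clean general argument is missing, which is why the statement is recorded here only as a conjecture rather than a theorem.
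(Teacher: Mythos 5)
This statement is recorded in the paper as a \emph{conjecture}, and the paper offers no proof of it: the preceding subsection only sketches the obstruction-theoretic strategy via Hochschild cohomology and \cite[Lemma B.4.1]{lefe2003} and then explicitly defers the matter. Your proposal follows essentially that same sketched strategy (the class $[m_p]$ in Hochschild cohomology, reformulated via matric Massey products and their indeterminacy), so you are not taking a different route --- you are elaborating the authors' own plan. But the elaboration does not close the argument, and you say so yourself: the decisive step, namely showing that $m_3(a,b,c)$ for some concrete triple of generators of $E_{N-2}^2$ is nonzero \emph{modulo the indeterminacy} $a\cdot E_{N-2}^2 + E_{N-2}^2\cdot c$, is announced as ``a bounded linear-algebra check'' but never performed. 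Until that check is done for at least one triple, nothing is proved even for $n=2$: Theorem \ref{Th:la3} only asserts that a particular minimal model has $m_3\neq 0$, which is well known to be compatible with formality (the representative could be a Hochschild coboundary, i.e.\ removable by a gauge transformation). So the gap is not a technicality; it is exactly the content of the conjecture.

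For general $(m,n)$ the gap is wider still, as you correctly diagnose: without a presentation of $E_m^n$ by quiver and relations for $n\geq 3$, neither the candidate cocycle nor the coboundary space is under control, and Lemma \ref{Le:Extrest} together with Shelton's recursion gives dimension information but no handle on which higher products survive gauge changes. Your write-up is a reasonable and honest research plan, and it correctly identifies where the difficulty lies, but as a proof of the stated conjecture it is incomplete at its central step; the appropriate conclusion is the one the paper itself draws, namely that the statement remains conjectural.
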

\FloatBarrier

%\bibliographystyle{alpha}
%% References with bibTeX database:

\bibliographystyle{elsarticle-harv}
%\bibliography{<your-bib-database>}

%% Authors are advised to submit their bibtex database files. They are
%% requested to list a bibtex style file in the manuscript if they do
%% not want to use elsarticle-harv.bst.

%% References without bibTeX database:

% \begin{thebibliography}{00}

%% \bibitem must have one of the following forms:
%%   \bibitem[Jones et al.(1990)]{key}...
%%   \bibitem[Jones et al.(1990)Jones, Baker, and Williams]{key}...
%%   \bibitem[Jones et al., 1990]{key}...
%%   \bibitem[\protect\citeauthoryear{Jones, Baker, and Williams}{Jones
%%       et al.}{1990}]{key}...
%%   \bibitem[\protect\citeauthoryear{Jones et al.}{1990}]{key}...
%%   \bibitem[\protect\astroncite{Jones et al.}{1990}]{key}...
%%   \bibitem[\protect\citename{Jones et al., }1990]{key}...
%%   \harvarditem[Jones et al.]{Jones, Baker, and Williams}{1990}{key}...
%%

% \bibitem[ ()]{}

% \end{thebibliography}

\end{document}